\newtheorem{thm}{Theorem}
\newtheorem{cor}[thm]{Corollary}
\newtheorem{lem}[thm]{Lemma}
\newtheorem{que}[thm]{Question}
\newtheorem{prop}[thm]{Proposition}
\newtheorem{exas}[thm]{Examples}
\theoremstyle{definition}
\numberwithin{equation}{section}
\newcommand{\Z}{{\mathbb Z}}
\begin{document}

\title[Weakly nil-clean rings]{Rings in which every element is either a sum or \\ a difference of a nilpotent and an idempotent}

\date{\today}

\author{Simion Breaz}
\address{Faculty of Mathematics
and Computer Science, Babe\c s-Bolyai University, 400084 Cluj-Napoca, Romania}
\email{bodo@math.ubbcluj.ro}
\author{Peter Danchev}
\address{Department of Mathematics, Plovdiv University, 4000 Plovdiv, Bulgaria}
\email{pvdanchev@yahoo.com}
\author{Yiqiang Zhou}
\address{Department of Mathematics and Statistics, Memorial University of Newfoundland,
St. John's NFLD A1C 5S7, Canada}
\email{zhou@mun.ca}

\subjclass[2010]{Primary 16 E50, 16 S34, 16 U10.}
\keywords{nil-clean rings, weakly nil-clean rings, matrix rings, abelian rings.}

\baselineskip=20pt

\begin{abstract}
{Generalizing the notion of nil cleanness from \cite{D13}, in parallel to \cite{DM14}, we define the concept of {\it weak nil cleanness} for an arbitrary ring. Its comprehensive study in different ways is provided as well. A decomposition theorem of a weakly nil-clean ring is obtained. It is completely characterized when an abelian ring is {\it weakly nil-clean}.} It is also completely determined when a matrix ring over a division ring is weakly nil-clean.
\end{abstract}

\maketitle

%\thispagestyle{empty}

%\bigskip

\section{Introduction and background}

All rings $R$ in this paper are associative with $1$, but not necessarily commutative. The letters $U(R)$, $J(R)$, $Id(R)$, $Nil(R)$ and $Z(R)$ will stand for the set of units, the Jacobson radical, the set of idempotents, the set of nilpotents and the center of $R$, respectively. This work is motivated by the notions of cleanness, weak cleanness, and nil-cleanness. A ring $R$
is called {\it clean} if each element $r\in R$ can be written as $r=u+e$ where $u\in U(R)$ and $e\in Id(R)$ \cite{N77}.
As a variant of a clean ring, a ring is called {\sl nil-clean} if each element $a\in R$ can be represented as $a=b+e$, where $e\in Id(R)$ and $b\in Nil(R)$  \cite{D13}. If this presentation is unique, the ring $R$ is called {\it uniquely nil-clean}. 
Note that a ring $R$ is (nil-)clean if and only if,  for each $a\in R$, $a=b-e$ where $b$ is a unit (resp., nilpotent) and $e$ is an idempotent. In \cite{AA06} the authors called a commutative ring {\sl weakly clean} if every element is a sum or a difference of a unit and an idempotent.
Following the same idea, in \cite{DM14} was stated the definition of a {\it weakly nil-clean} commutative ring as such a ring $R$ for which any element $a\in R$ is of the form $a=b+e$ or $a=b-e$, where $b\in Nil(R)$ and $e\in Id(R)$. It is shown there that weakly nil-clean commutative rings are of necessity clean. Moreover, a ring $R$ is said to be {\it uniquely weakly nil-clean} if each element can be uniquely presented as the sum or the difference of a nilpotent and an idempotent. Likewise, an element $a\in R$ is \textit{nil-clean} (resp., {\it weakly nil-clean}) if $a=b+e$ (resp., $a=b+e$ or $a=b-e$), where $b$ is nilpotent and $e$ is idempotent.

The aim of the current paper is to study weak nil-cleanness in the context of arbitrary but not necessarily commutative rings; resultantly we enlarge the most part of the results in \cite{D13} to this new point of view. It is organized as follows: In the next section we state some fundamental facts concerning weakly nil-clean rings, including a decomposition
theorem (Theorem \ref{description}) which states that every weakly nil-clean ring is a direct product of a nil-clean ring and a ring with the nil-involution property. In the third section we study abelian weakly nil-clean rings, and we prove a structure theorem of these rings (Theorem \ref{fine}). In particular, we show that these rings are exactly the uniquely weakly nil-clean rings. After that, in the fourth section, we completely determine when the matrix ring over a division ring is weakly nil-clean.

 For $a\in R$, we write $\bar a=a+J(R)\in R/J(R)$. We also denote by ${\mathbb M}_n(R)$ the ring of all $n\times n$ matrices over $R$. All other unexplained explicitly below notion and notation are standard and follow essentially those from \cite{L01}.
 
\section{Weakly nil-clean rings}

The following assertion is useful for applications.

\begin{lem}\label{epi} Let $I$ be an ideal of a ring $R$. If $R$ is weakly nil-clean, then $R/I$ is weakly nil-clean. The converse holds if $I$ is nil.
\end{lem}

\begin{proof} We only need to show the converse. Let $a\in R$. Write either $a+I=(b+I)+(e+I)=(b+e)+I$ or $a+I=(b+I)-(e+I)=(b-e)+I$, where $b+I$ is a nilpotent and $e+I$ is an idempotent. It is obvious that $b$ is again a nilpotent. It is well known that
idempotents lift modulo any nil ideal, so that this allows to assume that $e$ is also an idempotent.

Furthermore, $a-b-e \in I$ or $a-b+e \in I$. It follows immediately that $a-e = b+c$ or $a+e=b+c$, where $c\in I$. Either way, $b+c$ is again a nilpotent; in fact, since $b^k=0$ for some $k\in \mathbb{N}$, we have that $(b+c)^k\in I$ because $I\lhd R$ is an ideal. So, $R$ is weakly nil-clean, as desired.
\end{proof}

The next result, which is a reduction to the semiprimitive case, was established in \cite{DM14} for commutative rings only.

\begin{thm}\label{one}
Let $R$ be a ring. The following are equivalent:
\begin{enumerate}
\item $R$ is weakly nil-clean.

\item $6$ is nilpotent and $R/6R$ is weakly nil-clean.

\item $R/J(R)$ is weakly nil-clean and $J(R)$ is nil.
\end{enumerate}
\end{thm}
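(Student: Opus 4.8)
The plan is to run the equivalences through \lemref{epi}, isolating the two genuinely substantive points. First observe that every integer is central in $R$, so $6R$ is a two-sided ideal; likewise $J(R)$ is an ideal. Hence the forward half of \lemref{epi} immediately yields that $R/6R$ and $R/J(R)$ are weakly nil-clean whenever $R$ is. Conversely, if $6$ is nilpotent then $6R$ is a \emph{nil} ideal (centrality gives $(6r)^n = 6^n r^n$), and if $J(R)$ is nil then, in either case, the converse half of \lemref{epi} recovers weak nil-cleanness of $R$ from that of the quotient. Thus $(2)\Rightarrow(1)$ and $(3)\Rightarrow(1)$ are immediate, as are the quotient halves of $(1)\Rightarrow(2)$ and $(1)\Rightarrow(3)$. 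What genuinely remains are the two assertions: \emph{$R$ weakly nil-clean implies $6\in Nil(R)$}, and \emph{$R$ weakly nil-clean implies $J(R)$ is nil}.

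For the nilpotence of $6$, the idea I would use is to test weak nil-cleanness on the single element $2$ and split on the sign. Write $2 = b+e$ or $2 = b-e$ with $b\in Nil(R)$ and $e\in Id(R)$. In the first case $b = 2-e$ gives $be = (2-e)e = e$, and iterating gives $b^n e = e$ for all $n$, so $b^k = 0$ forces $e = 0$ and $2 = b$ is itself nilpotent. In the second case $b = 2+e$; since $2$ is central, the binomial theorem together with $e^2 = e$ gives $(2+e)^k = 2^k + (3^k - 2^k)e$, so $b^k = 0$ yields $2^k + (3^k-2^k)e = 0$. Right-multiplying separately by $e$ and by $1-e$ extracts $3^k e = 0$ and $2^k(1-e) = 0$, whence $6^k = 2^k 3^k = 6^k e + 6^k(1-e) = 2^k(3^k e) + 3^k(2^k(1-e)) = 0$. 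Either way $6$ is nilpotent. This sign-splitting is the step I expect to be the main obstacle: choosing to test $2$ (rather than $-1$, which worked in the nil-clean case) and then tracking the two branches is exactly where the ``weakness'' is accounted for, the first branch forcing true nil-cleanness of $2$ and the ``difference'' branch forcing only the weaker conclusion on $6$.

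For the nilness of $J(R)$, I would take $x\in J(R)$ and reduce modulo $J(R)$. Writing $x = b+e$ (resp.\ $x = b-e$) and using $\bar x = 0$ gives $\bar e = -\bar b$ (resp.\ $\bar e = \bar b$); in both cases $\bar e$ is an idempotent equal to a nilpotent, hence $\bar e = 0$, i.e.\ $e\in J(R)$. Since $J(R)$ contains no nonzero idempotent, $e = 0$, so $x = b\in Nil(R)$. Thus every element of $J(R)$ is nilpotent. Assembling everything: $(1)\Rightarrow(2)$ from the $6$-computation plus \lemref{epi}; $(2)\Rightarrow(1)$ and $(3)\Rightarrow(1)$ from \lemref{epi}; and $(1)\Rightarrow(3)$ from the $J(R)$ argument plus \lemref{epi}, which closes all the equivalences.
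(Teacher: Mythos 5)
Your proposal is correct and follows essentially the same route as the paper: both transfer weak nil-cleanness across the quotients via \lemref{epi}, prove $6$ is nilpotent by decomposing the element $2$ and splitting on the sign, and prove $J(R)$ is nil by showing the idempotent in any decomposition of a radical element must vanish. The only differences are cosmetic --- your binomial computation $(2+e)^k=2^k+(3^k-2^k)e$ in the case $2=b-e$ replaces the paper's quadratic identity $6=(5-b)b$, and you organize the proof as $(1)\Leftrightarrow(2)$ and $(1)\Leftrightarrow(3)$ rather than the paper's cycle $(1)\Rightarrow(2)\Rightarrow(3)\Rightarrow(1)$.
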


\begin{proof}
(1)$\Rightarrow$(2). There exist an idempotent $e$ and a nilpotent $b$ such that $2=b+e$ or $2=b-e$. If $2=b+e$, then $2-b=e=e^2=(2-b)^2=4-4b+b^2$; so $2=(3-b)b$ is a nilpotent, and hence is in $J(R)$ as $2$ is central. Thus, $6\in J(R)$ follows. If $2=b-e$, then $b-2=e=e^2=(b-2)^2=b^2-4b+4$; so $6=(5-b)b$ is a nilpotent and is in $J(R)$. Applying Lemma \ref{epi} we obtain the conclusion.

(2)$\Rightarrow$(3). Note that $6\in J(R)$ since $6$ is a central nilpotent element. So $R/J(R)$ is isomorphic
to a factor ring of $R/6R$, hence it is weakly nil-clean in view of Lemma~\ref{epi}.

Suppose $j\in J(R)$. Then $j=b+e$ or $j=b-e$, where $e^2=e\in R$ and $b\in R$ is a nilpotent.
Assume that $b^n=0$. Therefore $(e-j)^n=0$ or $(e+j)^n=0$, and it follows that $e=e^n\in J(R)$. So we deduce $e=0$, and hence $j=b$ is a nilpotent. Thus, $J(R)$ is nil. As an image of $R$, in accordance with Lemma~\ref{epi}, $R/J(R)$ is clearly weakly nil-clean.

(3)$\Rightarrow$(1). Let $a\in R$. Then $\bar a =\bar b+\bar e$ or $\bar a=\bar b-\bar e$, where $\bar e$ is an idempotent and $\bar b$ is a nilpotent. As $J(R)$ is nil, idempotents lift modulo $J(R)$, so we can assume that $e^2=e$. Thus, there exists $j\in J(R)$ such that $a=(b+j)+e$ or $a=(b+j)-e$. Since $\bar b$ is a nilpotent, $b^m\in J(R)$ for some $m>0$. It follows that $(b+j)^m\in J(R)$. As $J(R)$ is nil, the element $(b+j)^m$ is a nilpotent, so $b+j$ is a nilpotent. Hence, $a$ is weakly nil-clean, as needed.
\end{proof}

The finite direct product of nil-clean rings is also a nil-clean ring, but an infinite direct product of nil-clean rings need not be nil-clean (see, for instance, Proposition 3.13 and the Remark after it in \cite{D13}). On the other side, for the case of weakly clean rings it is proved in \cite{AA06} that a direct product of rings is weakly clean if and only if all but one factors are clean and this factor is weakly clean. The same can be said of weak nil cleanness; even more the following criterion holds:

\begin{prop}\label{prod}
Let $R=\prod _{i\in I}R_i$ be a direct product of rings with $|I|\ge 2$.
\begin{enumerate}
\item Suppose $I$ is finite. Then $R$ is weakly nil-clean if and only if there exists $k\in I$ such that $R_k$ is weakly nil-clean and $R_j$ is nil-clean for all $j\not= k$.

\item Suppose $R$ is abelian. Then
$R$ is weakly nil-clean if and only if there exists $k\in I$ such that $R_k$ is weakly nil-clean and $R_j$ is nil-clean for all $j\not= k$ and $(b_i)\in Nil(R)$ whenever $b_i\in Nil(R_i)$ for all $i\in I$.

\end{enumerate}
\end{prop}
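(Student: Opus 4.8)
The plan is to lean on two elementary observations and use them repeatedly. First, if $S$ is nil-clean then \emph{every} element of $S$ is simultaneously a sum and a difference of a nilpotent and an idempotent: given $a\in S$, applying nil-cleanness to $-a$ yields $-a=b'+e'$, so $a=(-b')-e'$ is a difference, while $a=b+e$ is a sum by definition. Second, an element $a=(a_i)$ of a product $\prod_i R_i$ is weakly nil-clean exactly when \emph{either} every $a_i$ is a sum of a nilpotent and an idempotent \emph{or} every $a_i$ is such a difference, subject to the proviso that the assembled nilpotent tuple is itself nilpotent; this proviso is automatic when $I$ is finite but is precisely the extra hypothesis appearing in part~(2). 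Thus the global sign forced on the decomposition is the real crux throughout.

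For part~(1), $(\Leftarrow)$: given $a=(a_i)$, read off the sign that the weakly nil-clean factor $R_k$ puts on $a_k$. If $a_k=b_k+e_k$, represent each $a_j$ ($j\neq k$) as a sum (possible since $R_j$ is nil-clean) and assemble $a=(b_i)+(e_i)$; if $a_k=b_k-e_k$, represent each $a_j$ as a difference (possible by the first observation) and assemble $a=(b_i)-(e_i)$. In either case $(b_i)$ is nilpotent in the finite product and $(e_i)$ is idempotent. For $(\Rightarrow)$, each $R_i$ is weakly nil-clean by Lemma~\ref{epi} applied to the projection $R\to R_i$. If two distinct factors, say $R_1,R_2$, failed to be nil-clean, I would choose $a_1\in R_1$ that is not a sum of a nilpotent and an idempotent, and $y\in R_2$ likewise; then $a_2:=-y$ is not a \emph{difference} of such. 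The element $a=(a_1,a_2,0,0,\dots)$ admits no sum representation (it fails in coordinate $1$) and no difference representation (it fails in coordinate $2$), contradicting weak nil-cleanness of $R$. Hence at most one factor is non-nil-clean, which is the assertion (and if all are nil-clean, any $k$ works).

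For part~(2), the $(\Leftarrow)$ direction is the same sign-matching, now invoking the hypothesis ``$(b_i)\in Nil(R)$ whenever each $b_i\in Nil(R_i)$'' to guarantee that the assembled tuple $(b_i)$ is genuinely nilpotent in the possibly infinite product. For $(\Rightarrow)$, each $R_i$ is again weakly nil-clean by Lemma~\ref{epi}, and is abelian because an idempotent of $R_i$, placed in the $i$-th slot and zero elsewhere, is an idempotent of $R$ and therefore central. The ``at most one non-nil-clean factor'' conclusion then follows verbatim from the sign-conflict element above, which never used finiteness of $I$. The remaining, and I expect the only genuinely delicate, point is to derive the tuple condition itself. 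Here I would take any $b=(b_i)$ with each $b_i\in Nil(R_i)$, apply weak nil-cleanness of $R$ to write $b=c\pm f$ with $c=(c_i)$ nilpotent (so $c_i^m=0$ for a \emph{single} $m$ and all $i$) and $f=(f_i)$ idempotent, and show $f_i=0$ for every $i$. Multiplying the coordinate identity $b_i=c_i\pm f_i$ by the idempotent $f_i$, which is central precisely because $R_i$ is abelian, exhibits $f_i$ as a difference of two nilpotents inside the corner ring $R_i f_i$, namely $1_{R_i f_i}=p-q$ with $p,q$ nilpotent (taking $\{p,q\}=\{b_if_i,c_if_i\}$ in the order dictated by the sign). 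But then $p=1_{R_if_i}+q$ is a unit as well as a nilpotent, forcing $R_if_i=0$ and hence $f_i=0$. Consequently $b_i=c_i$ for all $i$ and $b^m=c^m=0$, so $b$ is nilpotent. The use of abelianness to make each $f_i$ central—so that $R_if_i$ is a unital ring in which $c_if_i$ remains nilpotent—is exactly the step that fails for general products, and is where I expect all of the difficulty to concentrate.
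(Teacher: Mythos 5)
Your proposal is correct and follows essentially the same route as the paper's proof: Lemma~\ref{epi} applied to the projections, the sign-conflict element (one coordinate not a sum, another coordinate not a difference, obtained by negating) to rule out two non-nil-clean factors, and the sign-matching assembly for the sufficiency, with the tuple condition extracted by applying weak nil-cleanness of $R$ to a tuple of coordinate nilpotents. The only divergence is cosmetic and occurs at the endgame of part~(2): the paper argues by contradiction, using the binomial theorem to show the idempotent coordinate $e_j$ is nilpotent, whereas you kill each $f_i$ directly via the unit-plus-nilpotent argument in the corner ring $R_i f_i$ --- both hinge on exactly the same point, namely that abelianness makes the idempotent coordinates central.
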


\begin{proof}
$(1)$ Suppose that $R$ is weakly nil-clean. As an image of $R$, each $R_i$ is weakly nil-clean
as a consequence of Lemma~\ref{epi}. Assume that there exist two indices $i_1$ and $i_2$ such that
 neither $R_{i_1}$ nor $R_{i_2}$ are nil-clean. Then there exist $r_1\in R_1$ and $r_2\in R_2$ such that
$r_1$ is not a sum of a nilpotent and an idempotent and $r_2$ is not a difference of a nilpotent and an idempotent. Thus $(r_1,r_2)$ is not weakly nil-clean in $R_{i_1}\times R_{i_2}$, a contradiction.

Conversely, we assume that $R_k$ is not nil-clean for a fixed index $k\in I$.
Thus $R_j$ is nil-clean for all $j\neq k$. Let $r=(r_i)\in R$. Then there exist a nilpotent $b_k$ and an idempotent $e_k$ in $R_k$ such that either $r_k=b_1+e_k$ or $r_k=b_1-e_k$.  If $r_k=b_k+e_k$, for each $i\in I\setminus \{k\}$, write $r_i=b_i+e_i$ where $b_i$ is a nilpotent and $e_i$ is an idempotent. Therefore, $r=(b_i)+(e_i)$ is a sum of a nilpotent and an idempotent. If now $r_k=b_k-e_k$, for each $i\in I\backslash \{k\}$, write $r_i=b_i-e_i$ where $b_i$ is a nilpotent and $e_i$ is an idempotent. Consequently, $r=(b_i)-(e_i)$ is a difference of a nilpotent and an idempotent. So, $r$ is weakly nil-clean in $R$, as expected.

(2) The proof is similar to that of (1), except for the part that $R$ being abelian weakly nil-clean implies that
$b:=(b_i)\in Nil(R)$ for any $b_i\in Nil(R_i)$ ($i\in I$). Assume on the contrary that $b$ is not nilpotent. We write $b=c\pm e$ such that $c=(c_i)\in R$ is nilpotent and $e=(e_i)\in R$ is idempotent. Note that we can find a positive integer $k$ such that $c_i^k=0$ for all $i\in I$. Since $b$ is not nilpotent there exists an index $j\in I$ such that $b_j^k\neq 0$. From $c_j=b_j\mp e_j$
we obtain $(b_j\mp e_j)^k=0$. As a consequence of Binomial Theorem we obtain that $e_j$ is nilpotent. This is possible only if $e_j\in \{0,1\}$. But this implies that $b_j^k=0$ or $b_j$ is invertible, a contradiction.
\end{proof}

The following was proved in \cite[Proposition 1.10]{DM14} for commutative rings.

\begin{prop}\label{char-2}
A ring $R$ is nil-clean if and only if  $R$ is weakly nil-clean and $2\in J(R)$.
\end{prop}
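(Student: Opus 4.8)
The forward implication splits into two easy observations. That a nil-clean ring is weakly nil-clean is immediate from the definitions, since a decomposition $a=b+e$ is in particular of the form ``sum or difference''. To obtain $2\in J(R)$ I would imitate the computation in the proof of \thmref{one}: writing $2=b+e$ with $b\in Nil(R)$ and $e\in Id(R)$, the identity $2-b=e=(2-b)^2=4-4b+b^2$ rearranges to $2=(3-b)b$, which is nilpotent because $b$ is nilpotent and commutes with $3-b$. As $2$ is central, this central nilpotent element lies in $J(R)$.

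For the converse, suppose $R$ is weakly nil-clean with $2\in J(R)$; the goal is to upgrade every ``difference'' decomposition to a ``sum'' decomposition. First, by \thmref{one} the hypothesis that $R$ is weakly nil-clean already forces $J(R)$ to be nil. Now fix $a\in R$ and use weak nil-cleanness to write $a=b+e$ or $a=b-e$ with $b\in Nil(R)$ and $e\in Id(R)$. In the first case $a$ is already nil-clean, so assume $a=b-e$. The key move is the rewriting
\[
a=b-e=(b-2e)+e,
\]
which keeps $e$ idempotent and reduces the problem to checking that $b-2e$ is nilpotent.

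To verify this I pass to $R/J(R)$. Since $2\in J(R)$ and $J(R)$ is a two-sided ideal, $2e\in J(R)$, so $\overline{b-2e}=\bar b$ in $R/J(R)$; as $b$ is nilpotent, $\bar b$ is nilpotent, whence $(b-2e)^m\in J(R)$ for some $m$. Because $J(R)$ is nil, $(b-2e)^m$ is nilpotent, and therefore so is $b-2e$. Thus $a=(b-2e)+e$ exhibits $a$ as nil-clean, which finishes the argument.

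The only genuine subtlety lies in the converse, and there the point worth isolating is that one should \emph{not} attempt to convert $a=b-e$ into a sum directly in $R$ --- with $b$ and $e$ not commuting this is awkward --- but instead absorb the sign into $2e$, which is killed modulo the nil ideal $J(R)$. In effect $R/J(R)$ has $2=0$, so there $-\bar e=\bar e$ and ``weakly nil-clean'' collapses to ``nil-clean''; the replacement of $b-e$ by $(b-2e)+e$ is just the concrete lift of this observation back to $R$, and the nilness of $J(R)$ supplied by \thmref{one} is exactly what makes the lift go through.
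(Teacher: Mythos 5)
Your proof is correct, but it takes a more self-contained route than the paper's. For the necessity, where you redo the computation $2=(3-b)b$ from the proof of \thmref{one}, the paper simply cites \cite[Proposition 3.14]{D13}; both are fine, and your version has the merit of being explicit. The real difference is in the sufficiency. The paper argues at the level of the quotient: since $2\in J(R)$, the ring $R/J(R)$ has characteristic $2$, so $\bar x=-\bar x$ there and weak nil-cleanness of $R/J(R)$ collapses to nil-cleanness; it then invokes \cite[Corollary 3.17]{D13} to lift nil-cleanness from $R/J(R)$ back to $R$ along the nil ideal $J(R)$ --- a step that implicitly uses lifting of idempotents modulo a nil ideal. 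You instead work directly with elements of $R$: the weak nil-clean decomposition $a=b-e$ already lives in $R$, so no idempotent lifting is needed; you only have to check that $b-2e$ is nilpotent, which you do correctly by passing to $R/J(R)$ (where $2e$ vanishes) and using that $J(R)$ is nil, supplied by \thmref{one}. The conceptual engine is identical --- $2\in J(R)$ kills the sign modulo $J(R)$ --- but your argument trades the external lifting result for a concrete rewriting $(b-2e)+e$, which makes it more elementary and arguably more transparent, while the paper's version is shorter given the results of \cite{D13} and displays the proposition as an instance of a general lifting principle.
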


\begin{proof}
The necessity follows from \cite[Proposition 3.14]{D13}. For the sufficiency, note that $R/J(R)$ is of characteristic $2$, so $x=-x$ for every $x\in R/J(R)$. Then $R/J(R)$ is a nil-clean ring, and we can apply \cite[Corollary 3.17]{D13} since $J(R)$ is nil. \end{proof}

The main result in this section is the following decomposition theorem of a weakly nil-clean ring, which shows that a nil-clean component is split off from a weakly nil-clean ring. To precisely describe this situation, we need to define a special property of a ring.  According to \cite{GMW07}, a ring is said to satisfy the {\it involution property} if every element is a sum of a unit and an involution (i.e., an element whose square is $1$). Motivated by this, we say that a ring $R$ satisfies the {\it nil-involution property} if, for each $a\in R$, $a=u+v$ where $u\in {\rm Nil}(R)\pm 1$ and $v^2=1$.

\begin{thm}\label{description} The following are equivalent for a ring $R$:
\begin{enumerate}
\item $R$ is a weakly nil-clean ring.
\item $R\cong R_1\times R_2$, where $R_1$ is a nil-clean ring and $R_2$ is $0$ or an indecomposable weakly nil-clean ring with $3\in J(R_2)$. 
\item $R$ is a direct product of a nil-clean ring and a ring with the nil-involution property.
\end{enumerate}
\end{thm}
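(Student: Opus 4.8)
The plan is to establish the cycle $(1)\Rightarrow(2)\Rightarrow(3)\Rightarrow(1)$, using throughout that a weakly nil-clean ring has $6$ nilpotent and $J(R)$ nil (\thmref{one}).

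For $(1)\Rightarrow(2)$, the first step is to separate the ``$2$-part'' from the ``$3$-part''. Since $6^{k}=0$ for some $k$ and $\gcd(2^{k},3^{k})=1$, I would choose integers with $a2^{k}+b3^{k}=1$ and set $e=b3^{k}\cdot 1_{R}$; because $2^{k}3^{k}=6^{k}=0$, a direct check gives $e^{2}=e$, and $e$ is central, so $R=eR\times(1-e)R$. From $2^{k}e=b\,6^{k}=0$ one gets $2\in J(eR)$, whence $eR$ is nil-clean by \propref{char-2}; symmetrically $3\in J((1-e)R)$. Setting $R_{1}=eR$ and $R_{2}=(1-e)R$, it remains to see that $R_{2}$ is $0$ or indecomposable. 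This is the slick point: if $R_{2}=A\times B$ with $A,B\neq 0$, then by \propref{prod} one factor, say $A$, is nil-clean, so $2\in J(A)$; but $3\in J(A)$ as well, forcing $1=3-2\in J(A)$ and hence $A=0$, a contradiction.

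For $(2)\Rightarrow(3)$, I would show that the factor $R_{2}$ has the nil-involution property. Because $3\in J(R_{2})$ is nilpotent and central, $2=-(1-3)$ is a unit of $R_{2}$. Given $a\in R_{2}$, apply weak nil-cleanness to $2^{-1}a$ to write $2^{-1}a=b+e_{0}$ or $2^{-1}a=b-e_{0}$; multiplying by $2$ gives $a=2b+2e_{0}$ or $a=2b-2e_{0}$, and since $v=2e_{0}-1$ (respectively $v=1-2e_{0}$) is an involution, $a=(2b\pm 1)+v$ exhibits $a$ as a sum of an element of $\mathrm{Nil}(R_{2})\pm 1$ and an involution. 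Thus $R\cong R_{1}\times R_{2}$ displays $R$ as a nil-clean ring times a nil-involution ring (the zero ring if $R_{2}=0$).

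For $(3)\Rightarrow(1)$, write $R=S\times T$ with $S$ nil-clean and $T$ a nil-involution ring. I would first deduce that $2\in U(T)$: applying the nil-involution property to $1$ gives $1=(n\pm 1)+v$ with $v^{2}=1$; the $+1$ choice forces $n^{2}=1$ for a nilpotent $n$, hence $T=0$, while the $-1$ choice yields $3=n(4-n)\in\mathrm{Nil}(T)$, so $2$ is a central unit. Then, reversing the previous computation, for any $a$ I apply the property to $2a$ and divide by $2$, turning (nilpotent $\pm 1$) + (involution) into (nilpotent) $\pm$ (idempotent); hence $T$ is weakly nil-clean, and since $S$ is nil-clean, \propref{prod} gives that $R=S\times T$ is weakly nil-clean. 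The main obstacle throughout is recognizing and exploiting the precise dictionary between the two representations: the nil-involution property is exactly weak nil-cleanness ``scaled by $2$'', and this only makes sense once one sees that in the relevant factor $2$ is \emph{forced} to be a unit (equivalently $3$ is nilpotent); establishing $2\in U(T)$ directly from the nil-involution property, rather than assuming it, is the step that needs the most care.
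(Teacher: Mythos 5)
Your proof is correct, and its overall architecture coincides with the paper's: the same cycle $(1)\Rightarrow(2)\Rightarrow(3)\Rightarrow(1)$, the same splitting of $R$ into a factor where $2$ is nilpotent and a factor where $3$ is nilpotent, and the same supporting results (Theorem~\ref{one}, Proposition~\ref{char-2}, Proposition~\ref{prod}). The differences are local but genuine. In $(1)\Rightarrow(2)$ you construct the central idempotent $e=b3^k\cdot 1_R$ by hand where the paper invokes the Chinese Remainder Theorem to get $R\cong (R/2^nR)\times (R/3^nR)$ --- same content; and your indecomposability argument ($2$ and $3$ both landing in $J(A)$ forces $1=3-2\in J(A)$) is a clean variant of the paper's (a nil-clean factor would have $2$ simultaneously nilpotent and invertible). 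In $(2)\Rightarrow(3)$ the paper rewrites $a=b+e$ additively as $((b+3e)-1)+(1-2e)$, absorbing $3e\in J(R_2)$ into the nilpotent part, whereas you scale: apply weak nil-cleanness to $2^{-1}a$ and multiply back by $2$; both work, and your version makes the ``weak nil-cleanness $=$ nil-involution property scaled by $2$'' dictionary transparent, at the cost of first establishing $2\in U(R_2)$ (which you do correctly from $3\in J(R_2)$ nil). The most substantive divergence is in $(3)\Rightarrow(1)$: the paper obtains $2\in U(T)$ by citing \cite[Theorem 3.5]{FY08} (the nil-involution property implies the involution property), while you derive it directly by applying the nil-involution property to $1\in T$, getting either $n^2=1$ for a nilpotent $n$ (forcing $T=0$) or $3=n(4-n)\in \mathrm{Nil}(T)$; this makes the whole proof self-contained, which is a real gain. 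One small step you should make explicit: before applying Proposition~\ref{char-2} to $eR$, and before asserting that $(1-e)R$ is weakly nil-clean, note that quotients (equivalently, direct factors) of a weakly nil-clean ring are weakly nil-clean --- this is Lemma~\ref{epi}, which the paper cites at exactly the corresponding point.
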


\begin{proof}
$(1)\Rightarrow (2)$. By Theorem \ref{one}(2), we have $6^n=0$ for some $n>0$. Then $2^nR\cap 3^nR=0$ and $a^nR+3^nR=R$.
So $R\cong (R/2^nR)\times (R/3^nR)$ by the classical Chinese Remainder Theorem. By Lemma \ref{epi}, $R_1$ and $R_2$ are weakly nil-clean. As $2\in J(R_1)$, $R_1$ is nil-clean by Proposition \ref{char-2}. 

We can assume $R_2\not= 0$. Then $3\in J(R_2)$. In particular, $2$ is invertible in $R_2$. Hence we can use \cite[Proposition 3.14]{D13}
to observe that $R_2$ is not nil-clean. By Proposition \ref{prod}, we deduce that every weakly nil-clean ring with $2$ invertible
is indecomposable. Therefore, $R_2$ is indecomposable.

$(2)\Rightarrow (3)$. It suffices to show that any indecomposable weakly nil-clean ring $R$ with $3\in J(R)$ satisfies the nil-involution property. Note that $J(R)$ is nil (see Theorem~\ref{one}) and $3\in J(R)$. Let $a\in R$. Then there exist $b\in {\rm Nil}(R)$ and $e^2=e\in R$ such that $a=b+e$ or $a=b-e$. If $a=b+e$, then $a=((b+3e)-1)+(1-2e)$ with $(1-2e)^2=1$. Moreover, as $b^m=0$ for some $m>0$ and $3\in J(R)$, $(b+3e)^m\in J(R)$, so $b+3e$ is a nilpotent (as $J(R)$ is nil). If $a=b-e$, then $a=((b-3e)+1)+(-1+2e)$ with $(-1+2e)^2=1$. Moreover, as $b^m=0$ for some $m>0$ and $3\in J(R)$, $(b-3e)^m\in J(R)$, so $b-3e$ is a nilpotent (as $J(R)$ is nil). Hence, $R$ satisfies the nil-involution property.

$(3)\Rightarrow (1)$. It suffices to show that any ring $R$ with the nil-involution property is weakly nil-clean by Proposition~\ref{prod}. Since the nil-involution property obviously implies the involution property, $2$ is invertible in $R$ by \cite[Theorem 3.5]{FY08}. Let $a\in R$. Then $-2a=u+v$ where $u\in {\rm Nil}(R)\pm 1$ and $v^2=1$.  If $u=b+1$ with $b\in {\rm Nil}(R)$, then $a=(-b/2)-(1+v)/2$ with
$-b/2\in {\rm Nil}(R)$ and $(1+v)/2$ an idempotent. If $u=b-1$ with $b\in {\rm Nil}(R)$, then $a=(-b/2)+(1-v)/2$ with
$-b/2\in {\rm Nil}(R)$ and $(1-v)/2$ an idempotent. So, $R$ is weakly nil-clean.
\end{proof}

\begin{cor}\label{invol}
A ring $R$ satisfies the nil-involution property if and only if $R$ is weakly nil-clean with $2\in U(R)$.
\end{cor}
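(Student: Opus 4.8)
The plan is to read off both implications from the machinery already assembled in Theorem~\ref{description}, so that essentially no new computation is needed. The forward implication is almost immediate: if $R$ has the nil-involution property, then exactly the argument used for $(3)\Rightarrow(1)$ in Theorem~\ref{description} applies. There it is observed that the nil-involution property entails the ordinary involution property, whence $2\in U(R)$ by \cite[Theorem 3.5]{FY08}, and the same paragraph exhibits a weak nil-clean decomposition of each $a\in R$ by halving the equation $-2a=u+v$. Thus the nil-involution property delivers both $2\in U(R)$ and weak nil-cleanness simultaneously, which is the content of the ``only if'' direction.

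For the converse I would begin from the two structural facts available for any weakly nil-clean ring: $6$ is nilpotent by Theorem~\ref{one}(2), and $J(R)$ is nil by Theorem~\ref{one}(3). The key observation is that invertibility of $2$ upgrades ``$6$ nilpotent'' to ``$3$ nilpotent'': writing $6^n=0$ and using that $2^n\in U(R)$ together with the centrality (hence commutativity) of the integers $2$ and $3$, one gets $3^n=(2^n)^{-1}6^n=0$. Since $3\cdot 1$ is central, it follows that $3\in\mathrm{Nil}(R)\cap Z(R)\subseteq J(R)$.

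With $3\in J(R)$ in hand, I would simply reuse the explicit decomposition from the proof of $(2)\Rightarrow(3)$ in Theorem~\ref{description}, observing that that computation never actually invokes indecomposability --- it uses only $3\in J(R)$ and the nilness of $J(R)$. Concretely, given a weak nil-clean presentation $a=b+e$ one writes $a=\bigl((b+3e)-1\bigr)+(1-2e)$, and given $a=b-e$ one writes $a=\bigl((b-3e)+1\bigr)+(-1+2e)$; in each case the involution factor squares to $1$, while $b\pm 3e$ is nilpotent because some power of it lies in the nil ideal $J(R)$. This furnishes exactly the required decomposition $a=u+v$ with $u\in\mathrm{Nil}(R)\pm 1$ and $v^2=1$, proving the nil-involution property.

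I do not anticipate a genuine obstacle here; the single point requiring care is the reduction from $6$ to $3$, that is, confirming that $2\in U(R)$ is precisely what is needed to strip the prime $2$ out of the nilpotent $6$ and thereby land in the $3\in J(R)$ regime already treated inside Theorem~\ref{description}. Once that reduction is recorded, both directions follow at once from the theorem and its proof.
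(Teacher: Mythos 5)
Your proof is correct. The paper leaves this corollary without an explicit proof; the intended argument is to read it off from the \emph{statement} of Theorem~\ref{description}: for the ``if'' direction one writes $R\cong R_1\times R_2$ with $R_1$ nil-clean and $R_2$ having the nil-involution property, and then observes that $2\in U(R)$ forces $R_1=0$, because $2$ is nilpotent in any nil-clean ring by \cite[Proposition 3.14]{D13} and no nonzero ring has an element that is simultaneously nilpotent and invertible. You take a genuinely different, more self-contained route for this direction: rather than invoking the product decomposition (which rests on the Chinese Remainder Theorem), you derive $3\in J(R)$ directly from Theorem~\ref{one} ($6$ nilpotent, $J(R)$ nil) together with $2\in U(R)$, via $3^n=(2^n)^{-1}6^n=0$ and the fact that central nilpotents lie in $J(R)$; you then correctly note that the explicit computation inside the proof of $(2)\Rightarrow(3)$ --- namely $a=((b+3e)-1)+(1-2e)$ when $a=b+e$, and $a=((b-3e)+1)+(-1+2e)$ when $a=b-e$ --- uses only $3\in J(R)$ and the nilness of $J(R)$, never indecomposability, so it applies verbatim. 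Both arguments lean on the same machinery; the paper's route gets the corollary as an immediate formal consequence of the theorem's statement, while yours bypasses the splitting step entirely and makes transparent exactly which hypothesis ($3\in J(R)$) drives the involution decomposition. The forward direction is identical in both: nil-involution implies involution, so $2\in U(R)$ by \cite[Theorem 3.5]{FY08}, and halving $-2a=u+v$ yields weak nil-cleanness.
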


\begin{cor}\label{cor7}
Every weakly nil-clean ring is clean.
\end{cor}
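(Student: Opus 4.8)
The plan is to exploit the decomposition in Theorem~\ref{description} and treat the two types of factor separately, using the elementary observation that cleanness is preserved under (finite, indeed arbitrary) direct products: if $a_i=u_i+e_i$ in each $R_i$ with $u_i\in U(R_i)$ and $e_i\in Id(R_i)$, then $(u_i)$ is a unit and $(e_i)$ an idempotent of $\prod_i R_i$. By Theorem~\ref{description} a weakly nil-clean ring is isomorphic to $R_1\times R_2$, where $R_1$ is nil-clean and $R_2$ is either $0$ or a ring with the nil-involution property, so it suffices to show that each of these two kinds of rings is clean. For the nil-clean factor $R_1$ there is nothing new to prove: it is a classical fact that every nil-clean ring is clean (see \cite{D13}), which I would simply invoke.

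The real work is to show that a ring $S$ with the nil-involution property is clean, and this is where I expect the only genuine difficulty. First note that, by Corollary~\ref{invol} (equivalently, because the nil-involution property forces the involution property and hence $2\in U(S)$ by \cite[Theorem 3.5]{FY08}), $2$ is invertible in $S$. Now fix $a\in S$ and, crucially, apply the nil-involution representation not to $a$ but to the shifted element $1-2a$: write $1-2a=u+v$ with $u\in {\rm Nil}(S)\pm 1$ and $v^2=1$. Since $v^2=1$ and $2\in U(S)$, the element $e:=(1-v)/2$ satisfies $e^2=e$ and $v=1-2e$. Substituting this into $1-2a=u+v$ and simplifying yields $-2a=u-2e$, whence
\[
a=e-\tfrac{1}{2}u .
\]
Because $u=b\pm 1$ with $b$ nilpotent, $u$ is a unit of $S$, so $-\frac{1}{2}u\in U(S)$; thus $a=e+(-\frac{1}{2}u)$ exhibits $a$ as a sum of an idempotent and a unit, i.e. $a$ is clean. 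Since $a$ was arbitrary, $S$ is clean, and combining the two factors through the product reduction finishes the proof.

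The main obstacle is choosing the right element to feed into the nil-involution property: applying it to $a$ itself, or to $-2a$ as in the proof of Theorem~\ref{description}, only recovers weak nil-cleanness, leaving a stubborn expression of the form ``nilpotent $\pm$ idempotent''. The shift to $1-2a$ is precisely what converts the involution $v$ into the idempotent part and the nilpotent-shifted unit $u$ into the unit part. The only points requiring care are the verifications that $u$ is genuinely a unit (being $\pm 1$ plus a nilpotent) and that $2$ is invertible, both of which are guaranteed by the hypotheses; everything else is routine algebra.
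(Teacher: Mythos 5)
Your proof is correct, and it follows the paper's overall skeleton: decompose $R\cong R_1\times R_2$ via Theorem~\ref{description}, quote the classical fact that nil-clean rings are clean, and observe that cleanness passes to direct products. Where you diverge is in the treatment of the nil-involution factor. The paper disposes of it in one line: the nil-involution property implies the involution property, and rings with the involution property are clean by \cite[Theorem 3.5]{FY08} --- so the cleanness of that factor is a black-box citation. You instead prove it by hand: after securing $2\in U(S)$ (via Corollary~\ref{invol}, or equivalently the same appeal to \cite{FY08} the paper makes in Theorem~\ref{description}), you apply the nil-involution property to the shifted element $1-2a$, convert the involution $v$ into the idempotent $e=(1-v)/2$, and land on $a=e-\tfrac{1}{2}u$ with $u=b\pm 1$ a unit. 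The algebra checks out: $e^2=e$ since $v^2=1$ and $2^{-1}$ is central, and $b\pm 1$ is invertible for $b$ nilpotent. What your route buys is a self-contained, explicit clean decomposition for the nil-involution factor, essentially unwinding what the cited theorem of Fan--Yang would provide; it also mirrors the $-2a$ substitution trick already used in the proof of $(3)\Rightarrow(1)$ of Theorem~\ref{description}, so it fits the paper's toolkit. What the paper's route buys is brevity, at the cost of leaning on \cite{FY08} for the cleanness conclusion and not just for the invertibility of $2$.
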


\begin{proof}
If $R$ satisfies the nil-involution property, then $R$ has the involution property. So, $R$ is clean by
\cite[Theorem 3.5]{FY08}. Moreover, every nil-clean ring is clean. Therefore, every weakly nil-clean ring is clean by
Theorem~\ref{description}.
\end{proof}

To further understand the structure of a weakly nil-clean ring, we raise the following question.

\begin{que}
Characterize the rings satisfying the nil-involution property. 
\end{que}

Now we give some examples of weakly nil-clean rings. 

\begin{exas}\label{exas9}
{\rm (1) Let $n\geq 2$ and integer. Then the ring $\Z_n$ is weakly nil-clean if and only if $n=2^\ell 3^k$, where $\ell, k\geq 0$ are integers.

(2) Let $n=2^\ell 3^k$, let $R={\mathbb Z}_n[t]/(t^2)$, and let $\sigma : R\rightarrow R$ given by $a+bt\mapsto a$.
It is readily seen that $\sigma $ is an endomorphism of $R$. Set $S=R[x; \sigma ]/(x^2)$, that is, $S=\{r+sx: r,s\in R\}$ with $x^2=0$ and
$xr=\sigma(r)x$ for all $r\in R$. It is easily seen that $J(S)=J(R)+Rx$, so $J(S)^3=0$. Moreover,
$S/J(S)\cong \mathbb Z_n$. Consequently, $S$ is weakly nil-clean by Theorem~\ref{one}.
But for $r=1+t$ we have that $\sigma (r)=1$, so that $xr=\alpha (r)x=x\not= rx$. Thus $S$ is not commutative. Note that $S$ is a local ring. 

(3) Let $R$ be a ring and $M$ an $(R, R)$-bimodule. Then the trivial extension of $R$ by $M$,
$$R\ltimes M=\Big\{\begin{pmatrix} r & x \\ 0& r\end{pmatrix} \mid r\in R,\ x\in M\Big\}$$ is a weakly nil-clean ring if and only if $R$ is a weakly nil-clean ring.

(4) As usual, for any $n\geq 2$, ${\mathbb T}_n(R)$ denotes the triangular matrix ring over $R$. Then  ${\mathbb T}_n(R)$ is weakly nil-clean if and only if $R$ is nil-clean.}
\end{exas}

It is known that the center of a clean ring need not be clean \cite{BR13}.
In that aspect, the following is somewhat surprising, but it is useful in order to approach constructions of weakly nil-clean rings.

\begin{prop}\label{center} The center of a weakly nil-clean ring is again a weakly nil-clean ring.
\end{prop}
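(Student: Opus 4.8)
The plan is to forget about any global decomposition of $R$ and to argue element by element inside the commutative ring $Z(R)$, showing that each $z\in Z(R)$ has a weakly nil-clean decomposition whose nilpotent and idempotent parts already lie in $Z(R)$. The starting observation is a commutation fact: if $z\in Z(R)$ and, using that $R$ is weakly nil-clean, we write $z=b+e$ or $z=b-e$ with $b\in \mathrm{Nil}(R)$ and $e\in Id(R)$, then $e$ automatically commutes with $b=z\mp e$, since
$eb=e(z\mp e)=ez\mp e=ze\mp e=(z\mp e)e=be$,
where we used only that $z$ is central and $e^2=e$.

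First I would use this to produce a central nilpotent sitting next to $z$. In the sum case $z=b+e$ one computes $z^2=b^2+2eb+e$, hence $z-z^2=b-b^2-2eb=b(1-b-2e)$, a product of the nilpotent $b$ with an element commuting with it; this is therefore nilpotent, and being central it lies in $\mathrm{Nil}(Z(R))$. In the difference case $z=b-e$ the same computation gives $z+z^2=b(1+b-2e)\in \mathrm{Nil}(Z(R))$. Thus in the two cases, respectively, $z$ or $-z$ becomes idempotent modulo the nilradical $\mathrm{Nil}(Z(R))$, which is a nil ideal of the commutative ring $Z(R)$.

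Next I would lift, working entirely inside $Z(R)$. Since idempotents lift modulo any nil ideal (the fact already invoked in \lemref{epi}), in the sum case there is an idempotent $g\in Z(R)$ with $z-g\in \mathrm{Nil}(Z(R))$, so that $z=g+(z-g)$ is a central nil-clean, hence weakly nil-clean, decomposition. In the difference case the lift of $-z$ yields an idempotent $g\in Z(R)$ with $-z-g=:c\in \mathrm{Nil}(Z(R))$, and then $z=(-c)-g$ exhibits $z$ as a central nilpotent minus a central idempotent. Either way $z$ is weakly nil-clean in $Z(R)$, so $Z(R)$ is weakly nil-clean.

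The genuinely delicate point is centrality of the idempotent, and this is exactly where the centre of a weakly nil-clean ring behaves better than that of a merely clean ring. It is resolved not by trying to centralize the given $e$ — which need not be central — but by discarding $e$ and manufacturing a fresh idempotent as a lift of the \emph{central} element $z$ (or $-z$) modulo $\mathrm{Nil}(Z(R))$; because this lifting is carried out inside the ring $Z(R)$, the resulting idempotent automatically belongs to $Z(R)$ and is thus central in $R$. The only real input from weak nil cleanness of $R$ is that $z-z^2$ or $z+z^2$ is nilpotent, and I expect the verification of the two displayed identities, together with the routine remark that $\mathrm{Nil}(Z(R))$ is a nil ideal of the commutative ring $Z(R)$, to be the entire computational content of the argument.
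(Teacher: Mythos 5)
Your proof is correct, but it takes a genuinely different route from the paper's. The paper attacks the idempotent head-on: writing $a=b\pm e$ with $a$ central, it shows via the identities $e\in Ra^k$ and $(1-e)a^l=0$ that $ey(1-e)=0=(1-e)ye$ for every $y\in R$, so the \emph{given} idempotent $e$ (and hence $b$) is forced to be central. In particular your aside that $e$ ``need not be central'' is, with hindsight, inaccurate --- the paper's argument shows every weakly nil-clean decomposition of a central element automatically has central components, which is a strictly stronger conclusion and of some independent interest. Your argument instead discards $e$ after extracting the single fact (via the commutation $eb=be$, which you verify correctly) that $z-z^2$, respectively $z+z^2$, equals $b(1-b-2e)$, respectively $b(1+b-2e)$, and is therefore nilpotent; you then lift an idempotent modulo the nilradical entirely inside the commutative ring $Z(R)$, producing a possibly \emph{different} decomposition whose components are central by construction. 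What your approach buys is economy and self-containedness: the only external input is lifting of idempotents modulo nil ideals, a fact the paper already invokes in Lemma~\ref{epi}, and the computational content reduces to two one-line identities; it also makes transparent that the result is really a statement about $z\mp z^2$ being nilpotent in a commutative ring. What it gives up is the structural byproduct above, and the mild inefficiency that your decomposition of $z$ need not be the one you started from. Both proofs are complete and correct.
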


\begin{proof} Suppose $R$ is a weakly nil-clean ring. Given $a\in Z(R)$, we write $a=b+e$ or $a=b-e$, where $e\in Id(R)$ and $b\in Nil(R)$. We will foremost deal with the first equality. Multiplying it by $e$ from the left, we have that $ea=e+eb=e(1+b)$. Thus $e=ea(1+b)^{-1}=e(1+b)^{-1}a\in Ra$, and hence it is readily checked that $e\in Ra^k$ for any $k\geq 1$ because $e$ is an idempotent.

Furthermore, multiplying $a=b+e$ by $1-e$ from the left, we write $(1-e)a=(1-e)b$ whence $(1-e)be=(1-e)ae=(1-e)ea=0$ and so $(1-e)b=(1-e)b(1-e)$. Since there is $l\in \mathbb{N}$ such that $b^l=0$, we deduce that $(1-e)a^l=((1-e)a)^l=((1-e)b)^l=(1-e)b^l=0$.

Letting now $y\in R$ be an arbitrary element, by what we have established so far we infer that $ey(1-e)\in Ra^ly(1-e)=Ry(1-e)a^l=0$. Similarly, $(1-e)ye\in (1-e)yRa^l=(1-e)a^lyR=0$. Therefore, $ey(1-e)=(1-e)ye$, i.e., $ey=ye$ which means that $e\in Z(R)$. This ensures that $b\in Z(R)$ and thus it allows us to conclude that $a$ has a nil-clean decomposition.

The same manipulation also works for the other equality $a=b-e$ to get the wanted claim that $Z(R)$ is weakly nil-clean, as asserted.
\end{proof}

\begin{cor} The center of a nil-clean ring is also nil-clean.
\end{cor}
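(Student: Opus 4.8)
The plan is to obtain this corollary as a direct consequence of \propref{center} combined with the characterization in \propref{char-2}. First I would record the trivial observation that every nil-clean ring is in particular weakly nil-clean, since the representation $a=b+e$ is a special case of ``$a=b+e$ or $a=b-e$''. Hence, if $R$ is nil-clean, then \propref{center} immediately yields that $Z(R)$ is weakly nil-clean. The remaining task is therefore to upgrade this to nil cleanness of $Z(R)$.

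By \propref{char-2}, a weakly nil-clean ring is nil-clean precisely when $2$ lies in its Jacobson radical, so it suffices to verify that $2\in J(Z(R))$. Since $R$ is nil-clean, the necessity direction of \propref{char-2} (equivalently \cite[Proposition 3.14]{D13}) gives $2\in J(R)$; moreover $J(R)$ is nil in a weakly nil-clean ring by \thmref{one}, so $2$ is in fact a nilpotent element of $R$. As $2=1+1$ is central, it is a nilpotent element of the commutative ring $Z(R)$, and in a commutative ring every nilpotent lies in the Jacobson radical; thus $2\in J(Z(R))$. Applying the sufficiency direction of \propref{char-2} to the weakly nil-clean ring $Z(R)$ then gives that $Z(R)$ is nil-clean.

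The only step needing genuine care is the implication $2\in J(R)\Rightarrow 2\in J(Z(R))$, because in general $J(R)\cap Z(R)$ need not be contained in $J(Z(R))$. I expect this to be the main (and essentially only) obstacle, and I would resolve it exactly as above: rather than merely citing $2\in J(R)$, I would invoke the nilness of $J(R)$ to conclude that $2$ is truly nilpotent, after which membership in $J(Z(R))$ follows from commutativity of the center. Everything else is routine bookkeeping.
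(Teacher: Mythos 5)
Your proposal is correct and follows essentially the same route as the paper: both pass from \propref{center} to weak nil cleanness of $Z(R)$, observe that $2$ is a central nilpotent (the paper cites \cite{D13} directly, while you derive it from $2\in J(R)$ plus nilness of $J(R)$ via \thmref{one}), conclude $2\in J(Z(R))$ by commutativity of the center, and finish with \propref{char-2}. Your explicit flagging of why $2\in J(R)$ alone would not suffice is a nice touch, but the argument is the same as the paper's.
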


\begin{proof}
If $R$ is a nil-clean ring then $Z(R)$ is a weakly nil-clean ring. But in view of \cite{D13} we have that $2$ is a central nilpotent, hence $2\in J(Z(R))$ and so $Z(R)$ must be nil-clean by Proposition~\ref{char-2}.
\end{proof}

\section{Abelian weakly nil-clean rings}

The following assertion substantially improves Theorems 2.2 from \cite{Ch}.

\begin{thm}\label{fine}
The following are equivalent for a ring $R$:
\begin{enumerate}
\item $R$ is an abelian weakly nil-clean ring.
\item $R\cong R_1\times R_2$, where $R_1$ is abelian with $J(R_1)$ nil such that $R_1/J(R_1)$ is Boolean, and $R_2$ is $0$ or $R_2/J(R_2)\cong\mathbb{Z}_3$ with $J(R_2)$ nil.
\item $R$ is abelian, $J(R)$ is nil, and $R/J(R)$ is isomorphic to either a
Boolean ring, or to $\mathbb Z_3$, or to the direct product of two such rings.

\item $R$ is a uniquely weakly nil-clean ring.
\end{enumerate}
\end{thm}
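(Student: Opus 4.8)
The plan is to run the cycle $(1)\Rightarrow(2)\Rightarrow(3)\Rightarrow(4)\Rightarrow(1)$, with the whole argument resting on one lemma I would prove first: \emph{in an abelian weakly nil-clean ring $S$ one has $\mathrm{Nil}(S)=J(S)$}; in particular the nilpotents form an ideal and $S/J(S)$ is reduced. Since $J(S)$ is nil by Theorem~\ref{one}, only $\mathrm{Nil}(S)\subseteq J(S)$ needs proof, and it suffices to show that $sb$ is nilpotent for every $s\in S$ and every nilpotent $b$, because then $1-sb\in U(S)$ for all $s$, whence $b\in J(S)$. Suppose $sb$ is not nilpotent and write $sb=c+\delta f$ with $\delta\in\{1,-1\}$, $c$ nilpotent and $f$ a (central, as $S$ is abelian) idempotent; non-nilpotence forces $f\neq0$. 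In the corner ring $fS$ the element $(sb)f=\delta f+cf$ is a unit, being $\pm$ the identity of $fS$ plus a nilpotent, and centrality of $f$ lets me rewrite $(sb)f=(sf)(bf)$; thus the nilpotent $bf\in fS$ acquires a left inverse in $fS$, with $bf\neq0$ since $(sb)f$ is a unit. This is impossible: a nonzero nilpotent is never left invertible in any ring, because a left inverse makes it left cancellable while $b'\cdot b'^{\,n-1}=0=b'\cdot0$. I expect this lemma — exactly the point where abelianness (central idempotents) is indispensable — to be the main obstacle.

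Granting the lemma, $(1)\Rightarrow(2)$ goes through Theorem~\ref{description}: write $R\cong R_1\times R_2$ with $R_1$ nil-clean and $R_2$ either $0$ or indecomposable weakly nil-clean with $3\in J(R_2)$, both factors being abelian (a corner $eR$ inherits its idempotents from $R$, where they are central). For $R_1$, which is abelian nil-clean, $J(R_1)$ is nil and $R_1/J(R_1)$ is reduced by the lemma; a reduced nil-clean ring has every element equal to its idempotent part, hence is Boolean. If $R_2\neq0$, then $R_2/J(R_2)$ is semiprimitive, reduced, of characteristic $3$, and has only the idempotents $0,1$ (idempotents lift modulo the nil ideal $J(R_2)$ and $R_2$ is indecomposable); being reduced and weakly nil-clean, every element is $\pm$ an idempotent, so $R_2/J(R_2)=\{0,1,-1\}\cong\mathbb{Z}_3$. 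The step $(2)\Rightarrow(3)$ is then immediate, since $J(R)=J(R_1)\times J(R_2)$ is nil and $R/J(R)$ is Boolean or the product of a Boolean ring and $\mathbb{Z}_3$.

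For $(3)\Rightarrow(4)$ I would first note that $R$ is weakly nil-clean by Theorem~\ref{one} and that $\mathrm{Nil}(R)=J(R)$ by the lemma, so in any representation $a=b+\delta e$ the nilpotent $b$ lies in $J(R)$ and $e$ is a central idempotent. Given $a=b_1+\delta_1 e_1=b_2+\delta_2 e_2$, reduction modulo $J(R)$ gives $\bar e_i=\delta_i\bar a$ in the reduced ring $R/J(R)\cong B\times\mathbb{Z}_3$; a short check on the two components — using $-\varepsilon=\varepsilon$ in the Boolean factor, and that the only way two idempotents of $\mathbb{Z}_3$ can be negatives of one another is $0=-0$ — yields $\bar e_1=\bar e_2$ whatever the signs are. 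Then $u:=e_1-e_2$ is a central element of the nil ideal $J(R)$ with $u^3=u$, hence is annihilated by the units $u\pm1$, forcing $e_1=e_2$: the idempotent part is unique. Finally $(4)\Rightarrow(1)$: if $e$ is idempotent and $r\in R$, then $g:=e+er(1-e)$ is idempotent and $g=g+0=e+er(1-e)$ are two decompositions with idempotent parts $g$ and $e$, so uniqueness gives $e=g$, i.e.\ $er(1-e)=0$; symmetrically $(1-e)re=0$, so $R$ is abelian. I should flag that the operative meaning of ``uniquely weakly nil-clean'' here is uniqueness of the \emph{idempotent} part: the nilpotent summand and the sign genuinely need not be unique (for instance $3=2+1=0-1$ in $\mathbb{Z}_4$), and getting this bookkeeping right is the only delicate point beyond the lemma.
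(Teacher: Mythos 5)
Your proof is correct, and its skeleton --- the cycle $(1)\Rightarrow(2)\Rightarrow(3)\Rightarrow(4)\Rightarrow(1)$, with Theorem~\ref{description} supplying the splitting in $(1)\Rightarrow(2)$ and the perturbation trick $e=e+0=(e-er(1-e))+er(1-e)$ giving $(4)\Rightarrow(1)$ --- is the same as the paper's. The genuine difference is your standalone lemma that $\mathrm{Nil}(S)=J(S)$ in every abelian weakly nil-clean ring, proved via the corner ring $fS$ and the observation that a nonzero nilpotent cannot have a left inverse; that argument is sound. The paper never isolates this lemma: in $(1)\Rightarrow(2)$ it instead cites \cite[Proposition 3.18 and Corollary 3.19]{D13} to get $R_1/J(R_1)$ Boolean, and it handles $R_2$ by noting $\mathrm{Id}(R_2)=\{0,1\}$, so every element is nilpotent or invertible, so $R_2$ is local and $R_2/J(R_2)$ is a weakly nil-clean division ring of characteristic $3$, hence $\mathbb{Z}_3$; in $(3)\Rightarrow(4)$ it obtains $\mathrm{Nil}(R)=J(R)$ for free because $R/J(R)$ is reduced by hypothesis (your lemma is overkill at that step). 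So your route buys self-containedness, trading Diesl's results for one extra ring-theoretic argument. Your uniqueness computation (reduce modulo $J(R)$ componentwise, then $u=e_1-e_2\in J(R)$ with $u^3=u$ forces $u=0$) is an equivalent variant of the paper's identity $e-f=(e-f)(e+f)\in J(R)$ followed by ``idempotents lying in $J(R)$ are zero.'' Finally, your closing caveat is exactly right and matches the paper's operative reading of ``uniquely weakly nil-clean'': only the idempotent part can be unique (your $\mathbb{Z}_4$ example shows the sign and the nilpotent are not), and the paper's proof of $(3)\Rightarrow(4)$ establishes only $e=f$ while its proof of $(4)\Rightarrow(1)$ uses only that.
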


\begin{proof}
(1)$\Rightarrow$(2). By Theorem \ref{one}, $J(R)$ is nil.
Employing Theorem \ref{description}, we write $R\cong R_1\times R_2$, where $R_1$ is an abelian nil-clean ring and $R_2$ is $0$ or an abelian indecomposable weakly nil-clean ring with $3\in J(R_2)$. By \cite[Proposition 3.18 and Corollary 3.19]{D13}, $R_1/J(R_1)$ is Boolean. Assume $R_2\not= 0$. We observe that $\mathrm{Id}(R_2)=\{0,1\}$, so every element of $R_2$ is nilpotent or invertible. Therefore, $R_2$ has to be local. Thus, $R_2/J(R_2)$ is a division ring of characteristic $3$ and is weakly nil-clean. It must be that $R_2/J(R_2)\cong \mathbb Z_3$.

(2)$\Rightarrow$(3). It is obvious.

(3)$\Rightarrow$(4). By Theorem~\ref{one}, $R$ is weakly nil-clean. As $J(R)$ is nil and $R/J(R)$ is reduced, we deduce $J(R)=Nil(R)$. Assume that, for $a\in R$, there exist idempotents $e$ and $f$ and nilpotents $b$ and $c$ such that $a=b+e$ or $a=b-e$ and that $a=c+f$ or $a=c-f$. We have to show that $e=f$. There are four cases that we have to consider:
\begin{itemize}
\item[(i)] $a=b+e=c+f$; \item[(ii)] $a=b+e=c-f$; \item[(iii)] $a=b-e=c+f$; \item[(iv)] $a=b-e=c-f$.\end{itemize}
For case (i) or (iv), we have $e-f\in J(R)$. For case (ii) or (iii), we have $e+f\in J(R)$. Thus, in any case,
we have $e-f=(e-f)(e+f)\in J(R)$. It follows that $(1-e)f=-(1-e)(e-f)\in J(R)$ and $e(1-f)=(e-f)(1-f)\in J(R)$. As both $(1-e)f$ and $e(1-f)$ are idempotents, we conclude that $(1-e)f=0$ and $e(1-f)=0$. So $f=ef=e$, as required.

(4)$\Rightarrow$(1). It is enough to prove that $R$ is abelian. Let $e^2=e\in R$. Then, for any $r\in R$,
$e=e+0=(e-er(1-e))+er(1-e)$ are two decompositions into  the sum of an idempotent and a nilpotent. So
$er(1-e)=0$. Similarly, $(1-e)re=0$. Hence $er=re$, and thus all idempotents in $R$ are central, that is,
$R$ is abelian.
\end{proof}

Utilizing the same technique, a part of the last statement can be slightly extended to the quotient $R/J(R)$.

\begin{cor}
Let $R/J(R)$ be an abelian ring. Then $R$ is weakly nil-clean if and only if $J(R)$ is nil and $R/J(R)$ is isomorphic to either a Boolean ring, or to $\mathbb Z_3$, or to the direct product of two such rings.
\end{cor}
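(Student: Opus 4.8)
The plan is to reduce the whole statement to the semiprimitive quotient and then quote the abelian structure theorem there. Write $S := R/J(R)$; by hypothesis $S$ is abelian, and of course $J(S)=0$, so $S$ coincides with its own semiprimitive quotient. The whole point of the corollary is that, although $R$ itself need not be abelian, $S$ is; hence Theorem~\ref{fine} applies to $S$ even in situations where it cannot be applied to $R$.

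For the forward implication, suppose $R$ is weakly nil-clean. By Theorem~\ref{one} this already yields that $J(R)$ is nil and that $S$ is weakly nil-clean. Since $S$ is abelian, I would apply the implication $(1)\Rightarrow(3)$ of Theorem~\ref{fine} to $S$: it gives that $J(S)$ is nil (automatic here) and that $S/J(S)$ is isomorphic to a Boolean ring, to $\mathbb{Z}_3$, or to a direct product of two such rings. Because $J(S)=0$ we have $S/J(S)=S$, so this last statement is exactly the assertion that $R/J(R)$ has the required form. Together with the nilness of $J(R)$, this is precisely the desired conclusion.

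For the converse, assume $J(R)$ is nil and $R/J(R)=S$ is isomorphic to a Boolean ring, to $\mathbb{Z}_3$, or to a direct product of two such rings. Then condition $(3)$ of Theorem~\ref{fine} holds for $S$: indeed $S$ is abelian by hypothesis, $J(S)=0$ is trivially nil, and $S/J(S)=S$ has the prescribed shape. Hence $(3)\Rightarrow(1)$ of Theorem~\ref{fine} shows that $S$ is weakly nil-clean. Finally I would invoke $(3)\Rightarrow(1)$ of Theorem~\ref{one}: since $R/J(R)$ is weakly nil-clean and $J(R)$ is nil, $R$ is weakly nil-clean.

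There is no serious obstacle here; the argument is a formal composition of Theorems~\ref{one} and~\ref{fine}. The one point that must be handled with care is exactly the one that makes this corollary stronger than Theorem~\ref{fine}: we never need $R$ to be abelian, only $S=R/J(R)$, and the nested radical expression ``$S/J(S)$'' collapses to $S$ because $J(R/J(R))=0$. It is this collapse that lets us read the structural description off directly at the level of $R/J(R)$.
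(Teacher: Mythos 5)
Your proof is correct and is essentially the argument the paper intends: the paper offers no separate proof, saying only that the corollary follows ``utilizing the same technique'' as Theorem~\ref{fine}, and your formal composition---Theorem~\ref{one} to pass between $R$ and $S=R/J(R)$, plus Theorem~\ref{fine} applied to the abelian ring $S$ together with the collapse $J(S)=0$---is exactly that reduction. Nothing further is needed.
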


The next assertion settles in the affirmative Problem 1 from \cite{DM14}. 
It is worth noting that an abelian weakly nil-clean ring need not be commutative by Example \ref{exas9}(2).

\begin{cor}\label{commut}
Any reduced weakly nil-clean ring is commutative.
\end{cor}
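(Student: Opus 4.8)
The plan is to exploit the fact that reducedness forces the nilpotent component in any weakly nil-clean decomposition to vanish, collapsing weak nil-cleanness into the statement that every element is, up to sign, an idempotent. First I would observe that since $R$ is reduced we have $Nil(R)=0$. Hence, for each $a\in R$, writing $a=b+e$ or $a=b-e$ with $b\in Nil(R)$ and $e\in Id(R)$, we must have $b=0$, so that $a=e$ or $a=-e$. Thus every element of $R$ has the form $\pm e$ for some idempotent $e$.

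Next I would establish that every idempotent of a reduced ring is central. Given $e^2=e$ and an arbitrary $r\in R$, the element $er(1-e)$ squares to zero because $(1-e)e=0$; since $R$ is reduced this forces $er(1-e)=0$, i.e.\ $er=ere$. The symmetric computation applied to $(1-e)re$ gives $re=ere$, whence $er=re$ and $e$ is central. (Equivalently, this is precisely the abelianness that reducedness guarantees, so one could instead note that $R$ is abelian with $J(R)=Nil(R)=0$ and invoke Theorem~\ref{fine}; but the one-line nilpotency argument is more direct and self-contained.)

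Combining the two steps finishes the argument at once: every $a\in R$ equals $\pm e$ for a \emph{central} idempotent $e$, hence $a\in Z(R)$. Therefore $R=Z(R)$, and $R$ is commutative.

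I do not anticipate a genuine obstacle here. The substantive content is simply that reducedness does two things simultaneously: it annihilates the nilpotent summand, reducing the hypothesis to ``every element is plus or minus an idempotent,'' and it makes all idempotents central. The only point requiring the slightest care is the centrality of idempotents, which is the standard argument that $er(1-e)$ and $(1-e)re$ are nilpotent; once that is in hand the commutativity of $R$ is immediate.
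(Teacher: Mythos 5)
Your proof is correct, and it takes a genuinely different and more elementary route than the paper. The paper states this corollary as a consequence of Theorem~\ref{fine}: a reduced ring is abelian (idempotents are central) and has $J(R)=0$ since $J(R)$ is nil, so the structure theorem forces $R$ to be a Boolean ring, or $\mathbb{Z}_3$, or a direct product of two such rings, all of which are commutative. Your argument bypasses that machinery entirely: reducedness annihilates the nilpotent summand in every weakly nil-clean decomposition, so every element of $R$ is $\pm e$ for some idempotent $e$; the standard computation $\bigl(er(1-e)\bigr)^2=0=\bigl((1-e)re\bigr)^2$ then makes every idempotent central in a reduced ring, so every element lies in $Z(R)$ and $R$ is commutative. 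What your approach buys is brevity and self-containment — it needs nothing beyond the definitions and could be placed in Section 2 of the paper. What the paper's approach buys is stronger structural information: it does not merely prove commutativity but identifies $R$ up to isomorphism as Boolean, $\mathbb{Z}_3$, or a product of the two, which your argument recovers only implicitly (e.g., via the observation that every element satisfying $a=\pm e$ gives $a^3=a$).
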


As established above in Corollary \ref{cor7}, every weakly nil-clean ring is always clean. However, by a combination of Theorem~\ref{fine} with results from \cite{NZ04}, one can infer the surprising fact that a uniquely weakly nil-clean ring is not necessarily uniquely clean, and hence not necessarily uniquely nil-clean.

We recall from \cite[Theorem 5.4]{Ch} that {\it a ring $R$ is uniquely nil-clean if and only if $R$ is abelian nil-clean}. So, with Theorem~\ref{fine} at hand, one can deduce the following analogue of Proposition~\ref{char-2}.

\begin{cor} A ring $R$ is uniquely nil-clean if and only if $R$ is uniquely weakly nil-clean and $2\in J(R)$.
\end{cor}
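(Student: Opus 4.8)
The plan is to assemble the corollary from three results already in hand: the characterization of \cite[Theorem 5.4]{Ch} that uniquely nil-clean rings coincide with abelian nil-clean rings; the equivalence (1)$\Leftrightarrow$(4) of Theorem~\ref{fine}, which identifies abelian weakly nil-clean rings with the uniquely weakly nil-clean ones; and Proposition~\ref{char-2}, which links nil-cleanness to weak nil-cleanness through the condition $2\in J(R)$. Since each direction of the biconditional is a short deduction, the real work lies in choosing the correct order of implications rather than in any computation.

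For necessity I would start from $R$ uniquely nil-clean and invoke \cite[Theorem 5.4]{Ch} to conclude that $R$ is abelian and nil-clean. Being nil-clean, $R$ is in particular weakly nil-clean, so the necessity half of Proposition~\ref{char-2} gives $2\in J(R)$. Being abelian and weakly nil-clean, $R$ satisfies condition (1) of Theorem~\ref{fine}, hence also condition (4); that is, $R$ is uniquely weakly nil-clean. This establishes both halves of the right-hand side at once.

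For sufficiency I would run the same chain in reverse. Assuming $R$ is uniquely weakly nil-clean, the implication (4)$\Rightarrow$(1) of Theorem~\ref{fine} yields that $R$ is abelian weakly nil-clean. Adjoining the hypothesis $2\in J(R)$, the sufficiency half of Proposition~\ref{char-2} promotes this to $R$ being nil-clean, while $R$ remains abelian. A final application of \cite[Theorem 5.4]{Ch} then identifies $R$ as uniquely nil-clean.

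The one point I expect to require genuine care, rather than mere bookkeeping, is recognizing that both directions must pass through the abelian hypothesis: weak nil-cleanness by itself does not force every idempotent to be central, so one truly needs the uniqueness condition (via Theorem~\ref{fine}) to interpose abelianness before Proposition~\ref{char-2} and \cite[Theorem 5.4]{Ch} can be made to interlock. Once this bridge is identified, there is no further obstacle.
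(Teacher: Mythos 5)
Your proof is correct and follows exactly the route the paper intends: it combines \cite[Theorem 5.4]{Ch} (uniquely nil-clean $=$ abelian nil-clean), the equivalence (1)$\Leftrightarrow$(4) of Theorem~\ref{fine}, and Proposition~\ref{char-2}, which is precisely how the paper deduces this corollary. Nothing is missing; both directions are assembled in the same order the authors had in mind.
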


As aforementioned, it was proved in \cite{DM14} that a commutative weakly nil-clean ring is clean. Here we shall extend this to abelian rings in a wider context. An element $a$ in a ring $R$ is called {\it strongly $\pi$-regular} if $a^n\in Ra^{n+1}\cap a^{n+1}R$ for some $n>0$, and the ring is called {\it strongly $\pi$-regular} if every element of $R$ is strongly $\pi$-regular. It is known that every strongly $\pi$-regular ring is clean (see \cite{BM88} or \cite{N99}).

\begin{prop}\label{pireg}
Every weakly nil-clean element of an abelian ring is strongly $\pi$-regular. In particular, every abelian weakly nil-clean ring is strongly $\pi$-regular.
\end{prop}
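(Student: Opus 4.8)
The plan is to exploit the central idempotent $e$ appearing in the weakly nil-clean decomposition of $a$ so as to split the situation into a ``unit part'' and a ``nilpotent part,'' each of which is trivially strongly $\pi$-regular. Concretely, suppose $a=b+\e e$ with $\e\in\{+1,-1\}$, $b\in\mathrm{Nil}(R)$ satisfying $b^k=0$, and $e\in\mathrm{Id}(R)$. Since $R$ is abelian, $e$ is central, so $R\cong eR\times(1-e)R$ as rings with identities $e$ and $1-e$, and correspondingly $a=ea+(1-e)a$ with the two summands orthogonal (because $e(1-e)=0$ and $e$ is central).

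First I would show that $ea$ is a unit of the corner ring $eR$. Multiplying $a=b+\e e$ by the central idempotent $e$ gives $ea=eb+\e e$; centrality of $e$ yields $(eb)^k=eb^k=0$, so $eb$ is nilpotent in $eR$, whence $ea=\e e+eb$ is $\e$ times the identity of $eR$ plus a nilpotent, hence invertible in $eR$ (for $\e=-1$ one gets the negative of a $1+\text{nilpotent}$ unit, which is still a unit). Next, multiplying by $1-e$ gives $(1-e)a=(1-e)b$, which is nilpotent with $((1-e)a)^k=(1-e)b^k=0$. Thus under the identification $R\cong eR\times(1-e)R$ the element $a$ corresponds to a pair (unit, nilpotent).

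To conclude, I would record that units and nilpotents are both strongly $\pi$-regular and that this property passes through a finite direct product, so $a$ is strongly $\pi$-regular; or I would argue directly. Taking $n=k$, orthogonality of the two summands gives $a^k=(ea)^k+((1-e)a)^k=(ea)^k$ and likewise $a^{k+1}=(ea)^{k+1}$ (the $(1-e)$-parts vanish since $b^k=b^{k+1}=0$). Writing $v=(ea)^{-1}\in eR$, one obtains $a^k=(ea)^k=v(ea)^{k+1}=va^{k+1}\in Ra^{k+1}$, and the symmetric right-hand computation gives $a^k\in a^{k+1}R$, which is precisely strong $\pi$-regularity with exponent $k$. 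The ``in particular'' clause is then immediate, since in an abelian weakly nil-clean ring every element is weakly nil-clean.

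I expect no serious obstacle here: the whole argument is routine once the central idempotent is used to pass to the two Peirce corners. The only points requiring a little care are treating the two signs $\e=\pm1$ uniformly and verifying that the cross terms in $a^n=(ea)^n+((1-e)a)^n$ vanish, both of which rest on the single fact that $e$ is central with $e(1-e)=0$.
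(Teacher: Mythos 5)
Your proof is correct, and it rests on the same underlying idea as the paper's — a Fitting-type splitting of $a$ relative to the central idempotent $e$ into a unit part and a nilpotent part — but the execution is genuinely different at the final step. The paper repackages the weakly nil-clean decomposition as a sum of an idempotent and a unit, namely $r=(1-e)+(b-1)$ when $r=-e+b$ and $r=(1-e)+(2e-1+b)$ when $r=e+b$, observes that $r(1-e)=b(1-e)$ is nilpotent, and then invokes \cite[Proposition 2.5]{D13} to conclude strong $\pi$-regularity; as a byproduct it exhibits an explicit (strongly) clean decomposition of $r$. You instead carry out the Peirce decomposition $R\cong eR\times(1-e)R$ explicitly, check that $ea$ is a unit of $eR$ (handling both signs) and that $(1-e)a$ is nilpotent, and then verify the definition directly: since the cross terms vanish, $a^k=(ea)^k$ and $a^{k+1}=(ea)^{k+1}$, so with $v=(ea)^{-1}$ in $eR$ one gets $a^k=va^{k+1}=a^{k+1}v\in Ra^{k+1}\cap a^{k+1}R$. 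What your route buys is self-containedness — no appeal to Diesl's criterion, just the definition of strong $\pi$-regularity with an explicit witness $v$ — at the cost of a slightly longer computation; the paper's route is shorter and simultaneously records that every weakly nil-clean element of an abelian ring is (strongly) clean. All of your steps check out, including the orthogonality $(ea)\cdot((1-e)a)=0$ used to kill the cross terms, which does indeed rest only on the centrality of $e$.
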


\begin{proof}
Suppose that $r$ is a weakly nil-clean element of an abelian ring $R$.  If $r = -e + b$ for an idempotent $e$ and a nilpotent $b$, then $r = (1 - e) + (b -1)$, which is a decomposition of $r$ into the sum of an idempotent and a unit.  If, on the other hand, $r = b+e$ for an idempotent $e$ and a nilpotent $b$, then we can write $r = (1 - e) + (2e - 1 + b)$, which is also a decomposition of $r$ into the sum of an idempotent and a unit.  In fact, in each of these cases, $r(1-e)=b(1-e)$ is a nilpotent.  So $r$ is strongly $\pi$-regular by \cite[Proposition 2.5]{D13}.
\end{proof}

As a connection to strongly $\pi$-regular rings, one may state the following strengthening of results on uniquely nil-cleanness of rings from \cite{C11} and \cite{D13}.

\begin{cor} A ring $R$ is uniquely weakly nil-clean if and only if $R$ is abelian strongly $\pi$-regular such that $R/J(R)$ is isomorphic to either a Boolean ring, or to $\mathbb Z_3$, or to the direct product of two such rings.
\end{cor}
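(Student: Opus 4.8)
The plan is to read this corollary off from \thmref{fine} together with \propref{pireg}, so that the only genuinely new ingredient is the interplay between strong $\pi$-regularity and the nilness of the Jacobson radical. Both directions will reduce to matching hypotheses against the four equivalent conditions in \thmref{fine}.

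For the forward implication, I would suppose that $R$ is uniquely weakly nil-clean. By the equivalence (4)$\Leftrightarrow$(1) in \thmref{fine}, this already makes $R$ abelian and weakly nil-clean, whence \propref{pireg} immediately yields that $R$ is strongly $\pi$-regular. The prescribed description of $R/J(R)$ is then exactly the content of the implication (4)$\Rightarrow$(3) in \thmref{fine}. Thus all three asserted properties hold with no further computation.

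For the converse, I would start from the three hypotheses---$R$ abelian, $R$ strongly $\pi$-regular, and $R/J(R)$ isomorphic to a Boolean ring, to $\mathbb{Z}_3$, or to a product of two such rings---and aim to verify condition (3) of \thmref{fine}, since (3)$\Rightarrow$(4) there delivers precisely the unique weak nil-cleanness we want. Of the three clauses comprising (3), abelianness and the structure of $R/J(R)$ are given outright, so the single missing piece is that $J(R)$ is nil.

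This last point is the crux of the argument, and it is where strong $\pi$-regularity enters. The key step is the standard fact that the radical of any strongly $\pi$-regular ring is nil: given $j\in J(R)$, strong $\pi$-regularity furnishes $n>0$ and $a\in R$ with $j^n=aj^{n+1}$, so that $(1-aj)j^n=0$; since $aj\in J(R)$ the element $1-aj$ is a unit, and cancelling it forces $j^n=0$. Once $J(R)$ is known to be nil, condition (3) of \thmref{fine} is fully in place and the conclusion follows. I expect this nilness-of-radical step to be the only real obstacle, and it amounts to essentially a one-line unit cancellation; everything else is bookkeeping against \thmref{fine} and \propref{pireg}.
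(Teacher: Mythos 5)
Your proposal is correct and follows essentially the same route as the paper: reduce both directions to Theorem~\ref{fine} via Proposition~\ref{pireg}, with the nilness of $J(R)$ for strongly $\pi$-regular rings as the only new ingredient. The paper simply cites that nilness fact as well known, whereas you supply the short unit-cancellation argument for it; otherwise the two proofs coincide.
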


\begin{proof} It is well known that strongly $\pi$-regular rings $R$ have nil $J(R)$. Henceforth, we employ Proposition~\ref{pireg} and Theorem~\ref{fine} to get the wanted claim.
\end{proof}

\section{When is ${\mathbb M}_n(R)$ weakly nil-clean?}

By \cite{HN01}, matrix rings over a clean ring are again clean as well as by \cite[Proposition 2.6]{N77}. It is still a left-open question whether the matrix ring over a nil-clean ring is
nil-clean (see \cite{D13}). Note that in \cite{BCDM13} this was settled in the affirmative provided that
$R$ is commutative. However, as it will be manifestly shown below, even in the commutative case, the matrix
ring over a weakly nil-clean ring surprisingly need not be weakly nil-clean.

We will completely determine when a matrix ring over a division ring is weakly nil-clean. Let us start with a reduction technical assertion.

\begin{lem}\label{similar} Suppose that $A$ and $-A$ are similar matrices. Then $A$ is nil-clean if and only if $A$ is weakly nil-clean.
\end{lem}

\begin{proof}
If $A=N-E$ where $N$ is nilpotent and $E$ is idempotent and $P$ is an invertible  matrix such that $-A=PAP^{-1}$, then it is easy to check that $A=(-PNP^{-1})+PEP^{-1}$, that $-PNP^{-1}$ is a nilpotent matrix, and that $PEP^{-1}$ is idempotent. So, $A$ is nil-clean. The converse implication is obvious.
\end{proof}

The following statement is known, but we include a proof for the sake of completeness and reader's convenience.

\begin{lem}\label{trace=rank}
Let $F$ be a field and $E^2=E\in {\mathbb M}_n(F)$. Then ${\rm trace}(E)={\rm rank}(E)\cdot 1_F$.
\end{lem}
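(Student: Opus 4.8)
The plan is to exploit the fact that an idempotent matrix over a field is nothing but the matrix of a projection, which can be brought into a very explicit normal form; the trace invariance under conjugation then gives the result directly.

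First I would establish the vector-space decomposition $F^n = {\rm im}(E) \oplus \ker(E)$. For any $v \in F^n$ write $v = Ev + (v - Ev)$; here $Ev \in {\rm im}(E)$, while $E(v-Ev) = Ev - E^2 v = 0$ shows $v - Ev \in \ker(E)$, so the two subspaces span $F^n$. For directness, if $w \in {\rm im}(E) \cap \ker(E)$, say $w = Eu$, then $w = Eu = E^2 u = Ew = 0$. Hence the sum is direct and, in particular, $\dim {\rm im}(E) + \dim \ker(E) = n$.

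Next I would choose a basis of $F^n$ obtained by concatenating a basis $v_1, \dots, v_r$ of ${\rm im}(E)$ (so that $r = {\rm rank}(E)$) with a basis $v_{r+1}, \dots, v_n$ of $\ker(E)$. Since $E$ fixes every element of its image (for $v = Eu \in {\rm im}(E)$ one has $Ev = E^2 u = Eu = v$) and annihilates $\ker(E)$, the matrix of $E$ in this basis is the block-diagonal matrix $\begin{pmatrix} I_r & 0 \\ 0 & 0 \end{pmatrix}$. Equivalently, $E = P\, {\rm diag}(I_r, 0)\, P^{-1}$ for a suitable invertible $P$. As the trace is invariant under conjugation, ${\rm trace}(E) = {\rm trace}\big({\rm diag}(I_r,0)\big) = r \cdot 1_F = {\rm rank}(E)\cdot 1_F$.

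No step here is genuinely hard; the only point that must be handled with care is that the sum of the $r$ diagonal ones is the field element $r\cdot 1_F$ rather than the integer $r$, which matters precisely when $F$ has positive characteristic. This is exactly why the statement reads ${\rm rank}(E)\cdot 1_F$, and the argument above delivers it automatically. One could alternatively note that $x^2 - x$ has the distinct roots $0,1$ over $F$, so $E$ is diagonalizable with eigenvalues in $\{0,1\}$ and the number of ones equal to $r$; but the projection argument avoids invoking the theory of diagonalizability and keeps everything over the given field $F$.
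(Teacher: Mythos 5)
Your proof is correct and follows essentially the same route as the paper's: both reduce to the normal form $P E P^{-1} = \begin{pmatrix} I_r & 0 \\ 0 & 0 \end{pmatrix}$ with $r = {\rm rank}(E)$ and then invoke similarity-invariance of the trace to get $r\cdot 1_F$. The only difference is that the paper cites the diagonalizability of an idempotent as known, whereas you supply the short argument via $F^n = {\rm im}(E)\oplus\ker(E)$, which makes your write-up self-contained but does not change the approach.
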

\begin{proof}
It is known that $E$ is similar to a diagonal matrix, so there exists an invertible matrix $P\in {\mathbb M}_n(F)$ such that
$PEP^{-1}=\begin{pmatrix}I_k&\bf 0\\
                       \bf 0&\bf 0\end{pmatrix}$ with $k={\rm rank}(E)$. Because the trace is similarity-invariant, we have
											${\rm trace}(E)={\rm trace}(PEP^{-1})=k\cdot 1_F={\rm rank}(E)\cdot 1_F$.
\end{proof}

\begin{lem}\label{2x2}
The matrix $A=\begin{pmatrix}
               1 & 0  \\
               0 & -1  \\
               \end{pmatrix}\in\mathbb{M}_3(\Z_3)$
is not nil-clean.
\end{lem}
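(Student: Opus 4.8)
The plan is to argue by contradiction using the trace, taking advantage of the fact that we are over a field of characteristic $3$ in a small matrix size. Suppose $A$ were nil-clean, so that $A=N+E$ with $N$ nilpotent and $E=E^2$ idempotent. The first thing I would record is that over any field a nilpotent matrix has trace $0$: its characteristic polynomial is $x^n$, so all of its eigenvalues vanish. Taking traces in the identity $A=N+E$ therefore gives $\mathrm{trace}(A)=\mathrm{trace}(E)$.

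Next I would evaluate the two sides of this equality. On one hand, $\mathrm{trace}(A)=1+(-1)=0$ in $\mathbb{Z}_3$. On the other hand, Lemma~\ref{trace=rank} applies to the idempotent $E$ over the field $\mathbb{Z}_3$ and yields $\mathrm{trace}(E)=\mathrm{rank}(E)\cdot 1_{\mathbb{Z}_3}$. Combining the two gives $\mathrm{rank}(E)\cdot 1=0$ in $\mathbb{Z}_3$, that is, $\mathrm{rank}(E)$ is divisible by $3$.

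This is where the size of the matrix enters and where the (mild) crux lies. The rank of the displayed $2\times 2$ idempotent $E$ can only be $0$, $1$, or $2$, and the sole value among these that is divisible by $3$ is $0$. Hence $\mathrm{rank}(E)=0$, so $E=0$ and $A=N$ would be nilpotent. But $A$ is invertible, since its determinant is $-1=2\neq 0$ in $\mathbb{Z}_3$, and an invertible matrix is never nilpotent. This contradiction shows that $A$ is not nil-clean.

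I do not expect a serious obstacle here: the entire argument is a single trace count, and the one point that uses the specific data of the problem --- that the characteristic is exactly $3$ and the dimension is small enough that no admissible nonzero rank is a multiple of $3$ --- is precisely what Lemma~\ref{trace=rank} is set up to supply. The only care needed is to invoke Lemma~\ref{trace=rank} over the genuine field $\mathbb{Z}_3$ and to observe that it is the reduction modulo $3$ that makes the rank congruence bite. If instead one works with a larger matrix, the same computation forces $\mathrm{rank}(E)\in\{0,3,\dots\}$, and one simply checks directly that the corresponding candidates $E=0$ and $E=I$ both leave $A-E$ non-nilpotent, again because $A$ fails to be nilpotent.
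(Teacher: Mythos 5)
Your proof is correct and follows essentially the same route as the paper: both arguments reduce to the trace identity $\mathrm{trace}(A)=\mathrm{trace}(E)$, invoke Lemma~\ref{trace=rank} to force $\mathrm{rank}(E)\equiv 0 \pmod 3$, hence $E=0$ (since the matrix is $2\times 2$), and conclude that $A$ would be nilpotent, contradicting its invertibility. The only cosmetic difference is that the paper justifies $\mathrm{trace}(N)=0$ by conjugating $N$ to strictly triangular form, whereas you cite the characteristic polynomial of a nilpotent matrix; both are fine.
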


\begin{proof}
Assume $A=N+E$, where $N$ is nilpotent and $E$ is idempotent. There exists an invertible matrix $P\in {\mathbb M}_2(\mathbb Z_3)$ such that
$PNP^{-1}=\begin{pmatrix}
               0 & x  \\
               0 & 0  \\
               \end{pmatrix}$. From $PAP^{-1}=PNP^{-1}+PEP^{-1}$, it follows with Lemma~\ref{trace=rank} at hand that $0={\rm trace}(A)={\rm trace}(PAP^{-1})={\rm trace}(PNP^{-1})+{\rm trace}(PEP^{-1})=0+{\rm trace}(E)={\rm rank}(E)\cdot 1_{\mathbb Z_3}$, which is possible only if ${\rm rank}(E)=0$. Thus, we have $E=0$ , a contradiction.
\end{proof}

As a direct consequence to Lemmas~\ref{similar} and \ref{2x2}, we derive:

\begin{cor}
The matrix ring ${\mathbb M}_2(\mathbb Z_3)$ is not weakly nil-clean, though $\mathbb Z_3$ is weakly nil-clean.
\end{cor}

\begin{lem}\label{3x3}
The matrix $A=\begin{pmatrix}
               1 & 0 & 0 \\
               0 & -1 & 0 \\
               0 & 0 & 0
              \end{pmatrix}\in\mathbb{M}_3(\Z_3)$
is not nil-clean.
\end{lem}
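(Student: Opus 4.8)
The plan is to mimic the trace argument of Lemma~\ref{2x2}, the only new feature being that for a $3\times 3$ matrix the rank of an idempotent may equal $3$, which is again $\equiv 0 \pmod 3$, so one extra case must be eliminated. First I would suppose, for contradiction, that $A=N+E$ with $N$ nilpotent and $E$ idempotent in $\mathbb{M}_3(\mathbb{Z}_3)$. Since $\mathbb{Z}_3$ is a field, $N$ is similar to a strictly upper triangular matrix, so ${\rm trace}(N)=0$; together with Lemma~\ref{trace=rank} this yields
$${\rm trace}(A)={\rm trace}(N)+{\rm trace}(E)={\rm rank}(E)\cdot 1_{\mathbb{Z}_3}.$$
Because ${\rm trace}(A)=1+(-1)+0=0$ in $\mathbb{Z}_3$, I conclude ${\rm rank}(E)\cdot 1_{\mathbb{Z}_3}=0$, and since $0\le {\rm rank}(E)\le 3$ this forces ${\rm rank}(E)=0$ or ${\rm rank}(E)=3$.

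Next I would dispose of the two surviving cases. If ${\rm rank}(E)=0$ then $E=0$, so $A=N$ would be nilpotent; but $A=\mathrm{diag}(1,-1,0)$ satisfies $A^k=\mathrm{diag}(1,(-1)^k,0)\neq 0$ for every $k\geq 1$, a contradiction. If ${\rm rank}(E)=3$ then $E$ is a full-rank, hence invertible, idempotent, and $E^2=E$ forces $E=I_3$; then $N=A-I_3=\mathrm{diag}(0,-2,-1)=\mathrm{diag}(0,1,2)$ over $\mathbb{Z}_3$, a nonzero diagonal matrix with nonzero (hence invertible) diagonal entries, which is therefore not nilpotent---again a contradiction. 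Either way no such decomposition exists, so $A$ is not nil-clean.

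The step I expect to require the most care is ruling out ${\rm rank}(E)=3$, i.e.\ the possibility $E=I_3$ that is absent in the $2\times 2$ situation of Lemma~\ref{2x2}; once one observes that a full-rank idempotent over a field must be the identity, this reduces to checking that the shifted matrix $A-I_3$ is not nilpotent, which is immediate from its diagonal form. The remaining ingredients---that a nilpotent matrix over a field has trace $0$, and that ${\rm trace}(E)={\rm rank}(E)\cdot 1_F$ by Lemma~\ref{trace=rank}---are exactly those used in the preceding lemma, so the whole argument is essentially a one-extra-case extension of the $2\times 2$ proof.
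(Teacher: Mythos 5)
Your proposal is correct and follows essentially the same route as the paper: the trace argument of Lemma~\ref{2x2} forces ${\rm rank}(E)\in\{0,3\}$, hence $E=0$ or $E=I_3$, and each case contradicts the fact that $A$ is neither nilpotent nor invertible (the paper phrases the second case as $A=N+I_3$ being invertible, which is equivalent to your observation that $A-I_3$ is not nilpotent). One small wording slip: $A-I_3=\mathrm{diag}(0,1,2)$ does \emph{not} have all diagonal entries nonzero, but since it has a nonzero diagonal entry it is indeed not nilpotent, so your conclusion stands.
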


\begin{proof}
Assume that $A=N+E$, where $N$ is nilpotent and $E$ is idempotent. As argued in the proof of Lemma \ref{2x2}, we obtain ${\rm rank}(E)\cdot 1_{\mathbb Z_3}=0$, which is possible only if ${\rm rank}(E)=0$ or ${\rm rank}(E)=3$. Hence $E=0$ or $E=I_3$ (as $E$ is idempotent). It follows that $A$ is nilpotent or invertible, a contradiction.
\end{proof}

If $A$ is a matrix, we denote by $L_i^A$ the $i$-th row of $A$ and by $C_j^A$ the $j$-th column of $A$. The next technical claim is a crucial tool.

\begin{lem}\label{nonil}
Let $A=\begin{pmatrix}
               1 & 0 & 0 & 0 \\
               0 & -1 & 0 & 0\\
               0 & 0 & 0 & 0 \\
               0 & 0 & 0 & 0
              \end{pmatrix}\in\mathbb{M}_4(\Z_3).$
If $A=E+N$ with $E$ an idempotent matrix and $N$ a nilpotent matrix, then $N\not= \begin{pmatrix}
               x_{11} & x_{12} & x_{13} & x_{14} \\
               x_{21} & x_{22} & x_{23} & x_{24}\\
               1 & 0 & x_{33} & x_{34} \\
               0 & 1 & x_{43} & x_{44}
      \end{pmatrix}.$
 \end{lem}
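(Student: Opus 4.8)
The plan is to argue by contradiction and reduce the whole question to a one-line trace computation over the field $\Z_3$, never touching the free entries $x_{ij}$. Suppose to the contrary that $A=E+N$ with $E^2=E$, $N$ nilpotent, and $N$ of the displayed shape. The only feature of that shape I will exploit is that the lower-left $2\times 2$ block of $N$ is the identity $I_2$; the actual values of the $x_{ij}$ will play no role whatsoever.

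First I would compute ${\rm trace}(E)$ in two independent ways. On one hand, $N$ is nilpotent over the field $\Z_3$, so all its eigenvalues vanish and ${\rm trace}(N)=0$; since ${\rm trace}(A)=1+(-1)+0+0=0$, the relation $A=E+N$ gives ${\rm trace}(E)={\rm trace}(A)-{\rm trace}(N)=0$. On the other hand, I would write $E=\begin{pmatrix}E_{11}&E_{12}\\ E_{21}&E_{22}\end{pmatrix}$ in $2\times 2$ blocks. Because $A$ has zero lower-left block while that of $N$ is $I_2$, we get $E_{21}=-I_2$. Comparing the lower-left blocks of the identity $E^2=E$ then yields $E_{21}E_{11}+E_{22}E_{21}=E_{21}$, that is $-E_{11}-E_{22}=-I_2$, so $E_{11}+E_{22}=I_2$; taking traces gives ${\rm trace}(E)={\rm trace}(E_{11})+{\rm trace}(E_{22})={\rm trace}(I_2)=2$.

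Putting the two computations together forces $0=2$ in $\Z_3$, which is absurd, and this contradiction proves the lemma. The one point that needs care is the block expansion of $E^2$, and the idea that keeps the proof short is the observation that the single block identity $E_{21}=-I_2$ already determines ${\rm trace}(E_{11})+{\rm trace}(E_{22})$ through the idempotency relation, so the unknown parameters $x_{ij}$ never have to be analyzed. This is the same trace bookkeeping that drives Lemmas \ref{2x2} and \ref{3x3}, now combined with a $2\times 2$ block decomposition rather than with Lemma \ref{trace=rank}.
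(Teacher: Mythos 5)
Your proof is correct, and it is genuinely different from --- and substantially shorter than --- the paper's. You derive a contradiction from two computations of ${\rm trace}(E)$: additivity of the trace, together with ${\rm trace}(N)=0$ (valid for any nilpotent matrix over a field) and ${\rm trace}(A)=0$, gives ${\rm trace}(E)=0$; on the other hand, reading off $E_{21}=A_{21}-N_{21}=-I_2$ and comparing lower-left blocks in $E^2=E$ gives $E_{21}E_{11}+E_{22}E_{21}=E_{21}$, i.e.\ $E_{11}+E_{22}=I_2$, whence ${\rm trace}(E)=2$; since $2\neq 0$ in $\Z_3$, you are done. The paper instead argues entry-by-entry: from $NE=AE-E$, $EN=EA-E$ and $A^2=E+EN+NE+N^2$ it computes $N^2$ explicitly in terms of the entries $e_{ij}$ of $E$, then uses the vanishing of two entries of $N^4=0$ to deduce $e_{21}=0$ and $e_{24}=1$, and finally arrives at $(1+e_{22})^2=-1$, which is unsolvable in $\Z_3$. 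Your argument buys several things: it never touches the unknown entries $x_{ij}$ (or $e_{ij}$), it requires no computation of $N^2$ or $N^4$, it does not even invoke Lemma~\ref{trace=rank} (so it is not quite ``the same trace bookkeeping'' as Lemmas~\ref{2x2} and~\ref{3x3}, which rest on that lemma --- yours replaces it by the block identity), and it proves the stronger fact that the configuration is impossible over any field of characteristic different from $2$: the only inputs are ${\rm trace}(A)=0$, the vanishing of the lower-left $2\times 2$ block of $A$, and the lower-left block of $N$ being a nonzero scalar matrix. The paper's computation, by contrast, is tied to the arithmetic of $\Z_3$ (non-solvability of $x^2=-1$). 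For the application in Theorem~\ref{main} both arguments serve equally well, but yours is cleaner and more conceptual.
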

\begin{proof}
Assume that $A=E+N$ such that $E=(e_{ij})$ is idempotent and $$N=\begin{pmatrix}
               x_{11} & x_{12} & x_{13} & x_{14} \\
               x_{21} & x_{22} & x_{23} & x_{24}\\
               1 & 0 & x_{33} & x_{34} \\
               0 & 1 & x_{43} & x_{44}
      \end{pmatrix}$$
			is nilpotent. Then we see
$$ N=\begin{pmatrix}
               1-e_{11} & -e_{12} & -e_{13} & -e_{14} \\
               -e_{21} & -1-e_{22} & -e_{23} & -e_{24}\\
               1 & 0 & -e_{33} & -e_{34} \\
               0 & 1 & -e_{43} & -e_{44}
      \end{pmatrix}.$$
			
\noindent From the equalities
$NE=AE-E$, $EN=EA-E$, and $A^2=E+EN+NE+N^2$ we obtain that

  $$    N^2=\begin{pmatrix}
               1-e_{11} & e_{12} & 0 & 0 \\
               e_{21} & 1 & -e_{23} & -e_{24}\\
               0 & 0 & e_{33} & e_{34} \\
               0 & 1 & e_{43} & e_{44}
      \end{pmatrix}.$$
Since $N^4=0$ we have $L_4^{N^2}C_1^{N^2}=0$, hence $e_{21}=0$. We also have $L_2^{N^2}C_2^{N^2}=0$, hence $e_{24}=1$.

Finally, from $L_2^{N}C_2^{N}=1$ we have $e_{21}e_{12}+(-1-e_{22})^2-e_{24}=1$. That is, $(1+e_{22})^2=-1$.
But the equation $x^2=-1$ is not solvable in $\Z_3$, so we have the expected contradiction.
\end{proof}

We come to our basic result in this section describing when the full matrix ring over a division ring is weakly nil-clean. It completely exhausts Problem 2 from \cite{DM14}. Before doing that, we need the following useful technicality.

\begin{lem}\label{power}
Let $D$ be a division ring and $n\ge 1$. If ${\mathbb M}_n(D)$ is weakly nil-clean, then $|D|\le 3$.
\end{lem}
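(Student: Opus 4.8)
The plan is to prove the contrapositive: assuming $|D| \ge 4$, I will exhibit a matrix in $\mathbb{M}_n(D)$ that is neither a sum nor a difference of a nilpotent and an idempotent, thereby showing $\mathbb{M}_n(D)$ is not weakly nil-clean. The key structural observation guiding the whole argument is \lemref{similar}: if a matrix $A$ is similar to $-A$, then $A$ is weakly nil-clean if and only if it is actually nil-clean. So my strategy is to find a diagonal matrix $A$ whose entries come in $\pm\lambda$ pairs (making $A$ similar to $-A$ via a permutation-type conjugation) and which simultaneously fails to be nil-clean.

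First I would reduce to a small, concrete case. Since $|D| \ge 4$, there exists an element $\lambda \in D$ with $\lambda \neq 0$ and $\lambda \neq \pm 1$; indeed $\lambda$ and $-\lambda$ are then distinct and neither equals $1$ or $-1$. I would look at the $2\times 2$ diagonal matrix $\begin{pmatrix} \lambda & 0 \\ 0 & -\lambda \end{pmatrix}$, or its padding up to size $n$ with zeros. The matrix $A$ and $-A$ are similar (swap the two coordinates), so by \lemref{similar} it suffices to show $A$ is not nil-clean. The trace of $A$ is $\lambda + (-\lambda) = 0$ (and the extra diagonal zeros contribute nothing). This mirrors the trace argument already used in \lemref{2x2} and \lemref{3x3}, but over a general division ring the trace is no longer multiplicative/similarity-invariant in the naive way, so I expect this is exactly where the real work lies.

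The main obstacle is that \lemref{trace=rank} was stated over a \emph{field}, and for a noncommutative division ring the trace is not invariant under conjugation, so the clean trace-equals-rank bookkeeping that killed the field cases does not transfer directly. To get around this I would reduce the division-ring problem to a problem over the field generated by $\lambda$ together with the idempotent/nilpotent data, or more robustly work with the \emph{reduced} trace into the center, or instead avoid trace entirely and argue via the structure of idempotents in $\mathbb{M}_n(D)$. The cleanest route, which I suspect the authors take, is to pass to $\mathbb{M}_n(F)$ where $F$ is a suitable commutative subfield of $D$ containing $\lambda$ (possible since any single element of a division ring generates a field), so that $A \in \mathbb{M}_n(F)$ and the field-level lemmas apply to the would-be nil-clean decomposition once one checks that the idempotent and nilpotent witnessing nil-cleanness can be taken over, or at least analyzed over, this commutative subring. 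Alternatively, one embeds the equation $A = N + E$ into $\mathbb{M}_n(D)$ and applies a dimension/rank count for idempotents over $D$ (idempotents in $\mathbb{M}_n(D)$ still have a well-defined rank, and $E$ nonzero forces $\mathrm{rank}(E) \ge 1$), then derives a contradiction from comparing the characteristic behavior of $A$ (whose eigenvalues $\pm\lambda$ are genuinely invertible and distinct from $0,\pm 1$) with that of $N + E$.

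Concretely I would organize the proof as: (i) pick $\lambda \in D \setminus \{0, 1, -1\}$, which exists precisely because $|D| \ge 4$; (ii) set $A = \mathrm{diag}(\lambda, -\lambda, 0, \dots, 0) \in \mathbb{M}_n(D)$ and observe $A$ is similar to $-A$; (iii) invoke \lemref{similar} to reduce to showing $A$ is not nil-clean; (iv) suppose $A = N + E$ with $N$ nilpotent and $E$ idempotent, and derive a contradiction by a rank/trace analysis over the commutative field $F = Z(D)(\lambda)$ or via the eigenvalue structure, using that $\lambda \notin \{0, \pm 1\}$ to preclude $E$ from having the rank forced by the trace identity. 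The step I expect to be delicate, and which I would flag explicitly, is legitimizing the trace computation in the noncommutative setting; I would resolve it by confining all the relevant matrices to $\mathbb{M}_n(F)$ for a commutative field $F \ni \lambda$, so that \lemref{trace=rank} applies verbatim and the contradiction $\mathrm{rank}(E)\cdot 1_F = 0$ with $\lambda \neq \pm 1$ can be extracted.
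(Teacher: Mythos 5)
Your reduction steps (i)--(iii) are fine: picking $\lambda\in D\setminus\{0,1,-1\}$, forming $A=\mathrm{diag}(\lambda,-\lambda,0,\dots,0)$ (for $n\ge 2$; the case $n=1$ needs the separate, easy observation that a weakly nil-clean division ring has at most $3$ elements), noting that $A$ is similar to $-A$, and invoking Lemma~\ref{similar} is exactly the kind of reduction the paper itself uses later, in the proof of Theorem~\ref{main} for $\mathbb{Z}_3$. The genuine gap is step (iv), which is the entire content of the lemma: you have no valid argument that $A$ is not nil-clean in ${\mathbb M}_n(D)$. Your main proposed fix --- confining the analysis to ${\mathbb M}_n(F)$ for a commutative subfield $F=Z(D)(\lambda)$ --- does not work, because the hypothetical decomposition $A=N+E$ takes place in ${\mathbb M}_n(D)$: the entries of $N$ and $E$ have no reason whatsoever to lie in $F$, and showing that $A$ is not nil-clean inside ${\mathbb M}_n(F)$ says nothing about ${\mathbb M}_n(D)$, since a matrix can acquire a nil-clean decomposition after enlarging the ring. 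You flag this as something ``to be checked,'' but it is precisely where the argument breaks, and there is no apparent repair along these lines.

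Moreover, even in the purely commutative case your trace computation does not finish the job. Lemma~\ref{trace=rank} gives $\mathrm{rank}(E)\cdot 1_F=\mathrm{trace}(E)=\mathrm{trace}(A)-\mathrm{trace}(N)=0$, which in characteristic $0$ indeed forces $E=0$ and a contradiction; but in characteristic $p>0$ it only says $p\mid \mathrm{rank}(E)$. For $|F|\ge 4$ with small characteristic (say $F=\mathbb{F}_4$ with $n\ge 4$, or $F=\mathbb{F}_5$ with $n\ge 5$) intermediate ranks survive the trace constraint, and excluding them requires genuine matrix analysis --- this is exactly why the paper needs the laborious Lemmas~\ref{2x2}, \ref{3x3} and \ref{nonil} just to settle the single case $D=\mathbb{Z}_3$. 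The paper's own proof of Lemma~\ref{power} avoids both difficulties: it takes the rank-one matrix $\mathrm{diag}(a,0,\dots,0)$ with $a\notin\{0,1,-1\}$ and adapts the proof of Theorem 3 of \cite{KLZ14}, an argument designed to work over an arbitrary, possibly noncommutative, division ring without any trace invariance; the extra hypothesis $a\neq -1$ (rather than just $a\neq 0,1$ as in \cite{KLZ14}) is what handles the ``difference of a nilpotent and an idempotent'' alternative. To complete your write-up you would either have to reconstruct such a division-ring argument for your matrix, or cite \cite{KLZ14} as the paper does.
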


\begin{proof}
It is easy to see that $D$ is weakly nil-clean if and only if $|D|\le 3$. So we can assume $n\ge 2$.
Let $A=\begin{pmatrix}a&0&\cdots &0\\
                       0&0&\cdots &0\\
                       \vdots&\vdots&\ddots&\vdots\\
                       0&0&\cdots&0\end{pmatrix}\in {\mathbb M}_n(D)$, where $a\in D\backslash \{0, 1, -1\}$. By adapting the proof of \cite[Theorem 3]{KLZ14}, we deduce that $A$ is not weakly nil-clean in ${\mathbb M}_n(D)$. Hence ${\mathbb M}_n(D)$ being weakly nil-clean implies $|D|\le 3$, as claimed.
\end{proof}

\begin{thm}\label{main}
Let $D$ be a division ring and $n\ge 1$. Then ${\mathbb M}_n(D)$ is weakly nil-clean if and only if either
\begin{enumerate} \item $D\cong \mathbb Z_2$, or
\item $D\cong \mathbb Z_3$ and $n=1$.
\end{enumerate}
\end{thm}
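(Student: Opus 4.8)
\textbf{Proof proposal for Theorem~\ref{main}.}

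The plan is to prove both directions, treating sufficiency quickly and concentrating on the harder necessity.

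For sufficiency, I would argue as follows. If $D\cong\mathbb Z_2$, then ${\mathbb M}_n(\mathbb Z_2)$ is nil-clean for every $n$ by the known result (cited in the excerpt) that matrix rings over a commutative nil-clean ring are nil-clean; in particular it is weakly nil-clean. If $D\cong\mathbb Z_3$ and $n=1$, then ${\mathbb M}_1(\mathbb Z_3)=\mathbb Z_3$ is weakly nil-clean by Examples~\ref{exas9}(1). This disposes of the ``if'' direction with almost no work.

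For necessity, suppose ${\mathbb M}_n(D)$ is weakly nil-clean. First, \lemref{power} immediately forces $|D|\le 3$, so $D\cong\mathbb Z_2$ or $D\cong\mathbb Z_3$. It remains to rule out the case $D\cong\mathbb Z_3$ with $n\ge 2$. The strategy here is to exhibit a single matrix $A\in{\mathbb M}_n(\mathbb Z_3)$ that is \emph{not} weakly nil-clean. By \lemref{similar}, a matrix that is similar to its own negative is weakly nil-clean if and only if it is nil-clean, so if I can produce an $A$ with $A\sim -A$ that is \emph{not} nil-clean, I am done. The natural candidate is the block-diagonal matrix
\begin{equation*}
A=\operatorname{diag}(1,-1,0,\dots,0)\in{\mathbb M}_n(\mathbb Z_3),
\end{equation*}
which satisfies $A\sim -A$ (swap the first two coordinates and negate). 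The cases $n=2$ and $n=3$ are exactly \lemref{2x2} and \lemref{3x3}, where the obstruction to nil-cleanness is the trace/rank identity of \lemref{trace=rank}: any idempotent summand would need rank a multiple of $3$, forcing $E\in\{0,I_n\}$ and hence $A$ nilpotent or invertible, which it is not.

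The main obstacle is $n\ge 4$. Here the trace argument alone no longer suffices, since a rank forced to be a multiple of $3$ is now genuinely achievable by a nonzero proper idempotent; this is precisely why \lemref{nonil} was proved, handling a specific $4\times 4$ configuration by a finer computation (reducing to the unsolvability of $x^2=-1$ in $\mathbb Z_3$). The plan for general $n\ge 4$ is a reduction to the settled small cases. Writing $A=E+N$, I would show the trace identity still forces $\operatorname{rank}(E)\equiv 0\pmod 3$, and then analyze the structure of a hypothetical idempotent $E$ and nilpotent $N$ summing to a matrix of rank $2$; by passing to a suitable invariant subspace decomposition (the image and kernel of the rank-$2$ matrix $A$, together with the $4$-dimensional ``interaction'' block captured by \lemref{nonil}), one reduces the question of nil-cleanness of $A$ in ${\mathbb M}_n(\mathbb Z_3)$ to the impossibility already recorded in Lemmas~\ref{3x3} and~\ref{nonil}. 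Concretely, I expect the argument to show that the nontrivial action of $N$ and $E$ must be concentrated on a block of size at most $4$ of the prescribed form, whereupon \lemref{nonil} yields the contradiction. Thus $A$ is not nil-clean, hence (by similarity to $-A$ and \lemref{similar}) not weakly nil-clean, so ${\mathbb M}_n(\mathbb Z_3)$ is not weakly nil-clean for $n\ge 2$. This completes necessity and the theorem.
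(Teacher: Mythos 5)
Your skeleton coincides exactly with the paper's: sufficiency via \cite{BCDM13} for ${\mathbb M}_n(\mathbb Z_2)$ and the trivial case $\mathbb Z_3$; necessity via \lemref{power} to force $|D|\le 3$, the matrix $A=\operatorname{diag}(1,-1,0,\dots,0)$, the similarity $A\sim -A$ together with \lemref{similar} to reduce weak nil-cleanness to nil-cleanness, and Lemmas~\ref{2x2} and~\ref{3x3} for $n=2,3$. But for $n\ge 4$ --- the only genuinely hard part of the theorem --- what you have written is a plan, not a proof. The phrases ``I would show,'' ``one reduces,'' and ``I expect the argument to show'' stand in for precisely the step that requires real work: why must a hypothetical decomposition $A=E+N$ (with $E$ idempotent, $N$ nilpotent) be ``concentrated'' in a corner block of size at most $4$ of the special shape handled by \lemref{nonil}? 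That claim is not self-evident, and nothing in your sketch establishes it.

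Moreover, the mechanism you propose --- an invariant subspace decomposition built from the image and kernel of $A$ --- does not obviously work, because there is no reason that $E$ or $N$ should preserve $\operatorname{im}(A)$ or $\ker(A)$; the possibility that they mix these subspaces is the entire difficulty. The paper's actual reduction is a concrete matrix manipulation: write the nilpotent part $B$ in block form with lower-left $(n-2)\times 2$ block $B_{21}$, so $\operatorname{rank}(B_{21})\in\{0,1,2\}$; conjugate by $P=\begin{pmatrix}I_2&0\\ 0&U\end{pmatrix}$, which fixes $A$, to put $B_{21}$ into reduced row echelon form; then exploit the idempotency of $A-B'$ to force the entries $b_{43},\dots,b_{n3}$ (when $\operatorname{rank}(B_{21})=1$), respectively $b_{53},\dots,b_{n3}$ and $b_{54},\dots,b_{n4}$ (when $\operatorname{rank}(B_{21})=2$), to vanish. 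Only then do $B'$ and $A-B'$ become block upper triangular with a $3\times 3$ (resp.\ $4\times 4$) top-left corner that is simultaneously a nilpotent-plus-idempotent decomposition of $A_1$ (resp.\ $A_2$), contradicting \lemref{3x3} (resp.\ \lemref{nonil}); the rank-$0$ case similarly contradicts \lemref{2x2}. Without this echelon-normalization-and-vanishing argument, or an equivalent substitute, your case $n\ge 4$ remains unproved, so the proposal has a genuine gap at its crux.
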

\begin{proof}
$(\Leftarrow$). By the usage of \cite{BCDM13}, the ring ${\mathbb M}_n(\mathbb Z_2)$ is nil-clean, and hence it is immediately weakly nil-clean. Moreover, it is obviously seen that $\mathbb Z_3$ is weakly nil-clean.

$(\Rightarrow )$. Suppose  ${\mathbb M}_n(D)$ is weakly nil-clean. It follows from Lemma~\ref{power} that $|D|\le 3$. To finish the proof, we assume
$D=\mathbb Z_3$ and then verify $n=1$. Let $A_{11}=\begin{pmatrix}1&0\\
                       0&-1\end{pmatrix}\in {\mathbb M}_2(\mathbb Z_3)$, $A_{1}=\begin{pmatrix}1&0&0\\
                       0&-1&0\\
											0&0&0\end{pmatrix}\in {\mathbb M}_3(\mathbb Z_3)$, and $A_{2}=\begin{pmatrix}1&0&0&0\\
                       0&-1&0&0\\
											0&0&0&0\\
											0&0&0&0\end{pmatrix}\in {\mathbb M}_4(\mathbb Z_3)$.  Next we show that  $A:=\begin{pmatrix}A_{11}&\bf 0\\
                       \bf 0&\bf 0\end{pmatrix}\in{\mathbb M}_n(\mathbb Z_3)$ is not weakly nil-clean for all $n\ge 2$. Let $Q=\begin{pmatrix}V&\bf 0\\
                       \bf 0&I_{n-2}\end{pmatrix}$ with $V=\begin{pmatrix}0&1\\
                       1&0\end{pmatrix}$. Then $QAQ^{-1}=-A$. Thus, by Lemma \ref{similar}, it suffices to show that $A\in{\mathbb M}_n(\mathbb Z_3)$ is not nil-clean for all $n\ge 2$. In view of Lemmas \ref{2x2} and \ref{3x3}, $A$ is not nil-clean in  ${\mathbb M}_n(\mathbb Z_3)$ for $2\leq n\leq 3$.
											
Assume that, for some $n\ge 4$, $A\in {\mathbb M}_n(\mathbb Z_3)$ is nil-clean.
Then, there exists a nilpotent $B=\begin{pmatrix}B_{11}&B_{12}\\
                       B_{21}&B_{22}\end{pmatrix}\in{\mathbb M}_n(\mathbb Z_3)$  with $B_{11}\in {\mathbb M}_2(\mathbb Z_3)$ such that
$A-B$ is an idempotent.	We show that this will lead to a contradiction.

As $B_{21}$ has $2$ columns, we infer that ${\rm rank}(B_{21})\le 2$.
If ${\rm rank}(B_{21})=0$, then $B_{21}=0$. It follows that $B_{11}$ is a nilpotent and that $A_{11}-B_{11}$ is an idempotent, and this is a contradiction by Lemma \ref{2x2}.

If ${\rm rank}(B_{21})>0$, the Gauss elimination shows that there exists an invertible matrix $U$ such that $UB_{21}$ is a reduced row echelon matrix.  Consider the invertible matrix $P=\begin{pmatrix}I_2&0\\
                       0&U\end{pmatrix}$. Then $PAP^{-1}=A$ and $B':=PBP^{-1}=\begin{pmatrix}B_{11}&B_{12}U^{-1}\\
                       UB_{21}&UB_{22}U^{-1}\end{pmatrix}$ is a nilpotent such that $A-B'$ is an idempotent. There are two cases as addressed below. Write $B'=(b_{ij})$.
											
{\bf Case 1}: If ${\rm rank}(B_{21})=1$, then
$UB_{21}=\begin{pmatrix}1&x\\
                       \bf 0&\bf 0\end{pmatrix}$ or $UB_{21}=\begin{pmatrix}0&1\\
                       \bf 0&\bf 0\end{pmatrix}$. From that  $A-B'$ is an idempotent, it follows that $b_{43}=\cdots=b_{n3}=0$. Hence
$B_1:=\begin{pmatrix}b_{11}&b_{12}&b_{13}\\
                       b_{21}&b_{22}&b_{23}\\
                       b_{31}&b_{32}&b_{33}\end{pmatrix}$ is a nilpotent and $A_1-B_1$ is an idempotent.	 This is a contradiction by Lemma \ref{3x3}.

{\bf Case 2}: $UB_{21}=\begin{pmatrix}1&0\\
                         0&1\\
                       \bf 0&\bf 0\end{pmatrix}$. From that $A-B'$ is an idempotent, it follows that $b_{53}=\cdots=b_{n3}=0$ and $b_{54}=\cdots=b_{n4}=0$. Hence, we deduce that $B_2:=\begin{pmatrix}b_{11}&b_{12}&b_{13}&b_{14}\\
                       b_{21}&b_{22}&b_{23}&b_{24}\\
                       1&0&b_{33}&b_{34}\\
											0&1&b_{43}&b_{44}\end{pmatrix}$ is a nilpotent and $A_2-B_2$ is an idempotent.	This is a contradiction by Lemma~\ref{nonil}.																																 
\end{proof}

Recall that a ring $R$ is {\it semilocal} if $R/J(R)$ is semisimple Artinian. As a direct consequence of Theorems \ref{one}, \ref{description} and \ref{main}, accomplished with the classical Wedderburn-Artin Theorem, we immediately yield:

\begin{cor} The following are equivalent for a semilocal ring $R$:
\begin{enumerate}
\item $R$ is weakly nil-clean;
\item  $J(R)$ is nil and $R/J(R)$ is isomorphic to either $C$, or $\mathbb Z_3$, or $C\times \mathbb Z_3$, where $C$ is a finite direct product of matrix rings over $\mathbb Z_2$.
\item $R\cong R_1\times R_2$, where $J(R_1), J(R_2)$ are nil, $R_1/J(R_1)$ is $0$ or a finite direct product of matrix rings over $\mathbb Z_2$, and $R_2/J(R_2)$ is $0$ or isomorphic to ${\mathbb Z}_3$.
\end{enumerate}
\end{cor}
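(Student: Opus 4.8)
The plan is to establish the cycle (1) $\Rightarrow$ (2) $\Rightarrow$ (1), and then to recover (3) by repackaging the decomposition from \thmref{description}. The engine throughout will be the Wedderburn--Artin theorem together with the complete list of weakly nil-clean matrix rings over division rings provided by \thmref{main}.

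To prove (1) $\Rightarrow$ (2), I would first invoke \thmref{one} to conclude that $J(R)$ is nil and that $R/J(R)$ is weakly nil-clean. Since $R$ is semilocal, $R/J(R)$ is semisimple Artinian, so Wedderburn--Artin expresses it as a finite direct product $\prod_{i=1}^m {\mathbb M}_{n_i}(D_i)$ of matrix rings over division rings. Each factor is an epimorphic image of $R/J(R)$, hence weakly nil-clean by \lemref{epi}, and \thmref{main} then forces every factor to be either ${\mathbb M}_{n_i}(\mathbb Z_2)$ or $\mathbb Z_3$. The decisive step is a counting argument: the rings ${\mathbb M}_{n_i}(\mathbb Z_2)$ are nil-clean (by \cite{BCDM13}, as recalled in the forward direction of \thmref{main}), whereas $\mathbb Z_3$ is weakly nil-clean but \emph{not} nil-clean, since $2$ is a unit in $\mathbb Z_3$ and so \propref{char-2} is violated. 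By \propref{prod}(1), a finite direct product can be weakly nil-clean only if at most one factor fails to be nil-clean; hence at most one factor equals $\mathbb Z_3$. Grouping all the ${\mathbb M}_{n_i}(\mathbb Z_2)$ factors into a single ring $C$ leaves exactly the three possibilities $C$, $\mathbb Z_3$, or $C\times \mathbb Z_3$ for $R/J(R)$, which is (2).

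For (2) $\Rightarrow$ (1), observe that in each listed case $R/J(R)$ is a finite direct product whose factors are the nil-clean rings ${\mathbb M}_{n_i}(\mathbb Z_2)$ together with at most one copy of the weakly nil-clean ring $\mathbb Z_3$; \propref{prod}(1) then gives that $R/J(R)$ is weakly nil-clean, and since $J(R)$ is assumed nil, \thmref{one} lifts this conclusion to $R$.

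Finally, to link (3) with the rest, I would apply \thmref{description} to the weakly nil-clean ring $R$, obtaining $R\cong R_1\times R_2$ with $R_1$ nil-clean and $R_2$ either $0$ or indecomposable weakly nil-clean with $3\in J(R_2)$; both factors are again semilocal with nil radical. Then $R_1/J(R_1)$ is a semisimple Artinian nil-clean ring, which by the matrix analysis above (a nil-clean ${\mathbb M}_n(D)$ must be ${\mathbb M}_n(\mathbb Z_2)$, as $\mathbb Z_3$ is excluded) is $0$ or a finite direct product of copies of ${\mathbb M}_{n_i}(\mathbb Z_2)$, whereas $3\in J(R_2)$ nil forces $R_2/J(R_2)$ to be a semisimple Artinian ring of characteristic $3$ in which $2$ is invertible; rerunning the Wedderburn--Artin plus \thmref{main} argument (now no $\mathbb Z_2$-matrix factor can occur, and \propref{prod} again caps the number of $\mathbb Z_3$-factors at one) yields $R_2/J(R_2)\cong \mathbb Z_3$ when $R_2\ne 0$. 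This is precisely (3), and conversely (3) reads $R/J(R)$ off as one of the three rings in (2). I expect the counting step to be the only real obstacle: one must recognize that $\mathbb Z_3$ is the sole non-nil-clean simple factor permitted and that \propref{prod} allows at most one such factor; the remainder is bookkeeping built on \thmref{main}.
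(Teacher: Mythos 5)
Your proposal is correct and follows exactly the route the paper intends: the paper states this corollary without a written proof, calling it a direct consequence of Theorems~\ref{one}, \ref{description} and \ref{main} together with the Wedderburn--Artin theorem, and your argument is precisely the fleshing-out of that citation (with the auxiliary use of Lemma~\ref{epi}, Proposition~\ref{prod}(1) and Proposition~\ref{char-2} to cap the number of $\mathbb Z_3$-factors at one). No gaps; the counting step you flag as the ``only real obstacle'' is handled correctly.
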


The following strengthens Corollary 4 of \cite{KLZ14} and also uses another idea for proof. Remember that a ring $R$ is said to be {\it strongly regular} if it is an abelian (von Neumann) regular ring.  By Theorem \ref{fine},  strongly regular weakly nil-clean ring is isomorphic to either a Boolean, or $\Z_3$, or a direct product of two such rings.

\begin{cor}\label{bool} Let $R$ be a strongly regular ring and let $n\geq 2$. Then $\mathbb{M}_n(R)$ is weakly nil-clean if and only if $R$ is Boolean.
\end{cor}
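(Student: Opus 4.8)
The plan is to prove the nontrivial implication by pushing weak nil-cleanness from $\mathbb{M}_n(R)$ down to its matrix blocks over the division-ring quotients of $R$, and then reading off the answer from the complete classification in \thmref{main}. The easy direction is almost immediate: a Boolean ring $R$ is commutative and nil-clean (each element $a$ equals $a+0$ with $a$ idempotent and $0$ nilpotent), so by \cite{BCDM13} the matrix ring $\mathbb{M}_n(R)$ over the commutative nil-clean ring $R$ is nil-clean, and hence in particular weakly nil-clean. No use of $n\ge 2$ is needed here.

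For the converse, assume $R$ is strongly regular and $\mathbb{M}_n(R)$ is weakly nil-clean with $n\ge 2$. First I would record the structural input that \emph{a strongly regular ring is a subdirect product of division rings}. Indeed, strongly regular rings are exactly those satisfying $a\in a^2R$ for every $a$, a property visibly inherited by every homomorphic image; and a regular ring has $J(R)=0$, so $R$ embeds subdirectly into the product of its primitive quotients $R/P$. Each such $R/P$ is again strongly regular, hence reduced, and being primitive it is prime; a reduced prime (hence domain) regular ring is a division ring. Thus every primitive quotient $D_P:=R/P$ is a division ring, and $\bigcap_P P=0$.

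Next I would transport weak nil-cleanness to these division-ring blocks. For each primitive ideal $P$, the set $\mathbb{M}_n(P)$ is a two-sided ideal of $\mathbb{M}_n(R)$ with $\mathbb{M}_n(R)/\mathbb{M}_n(P)\cong \mathbb{M}_n(D_P)$, so by \lemref{epi} each $\mathbb{M}_n(D_P)$ is weakly nil-clean. Since $n\ge 2$, \thmref{main} leaves exactly one admissible possibility, namely $D_P\cong\mathbb{Z}_2$ (the alternative $D\cong\mathbb{Z}_3$ is available only when $n=1$). Hence $R$ is a subdirect product of copies of $\mathbb{Z}_2$, the identity $x^2=x$ holds in each factor and therefore in $R$, and so $R$ is Boolean.

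The main obstacle is the structural lemma that a strongly regular ring decomposes subdirectly into division rings, and in particular that every primitive quotient is a division ring; once that is in hand, the remaining steps are routine, since $\mathbb{M}_n(-)$ commutes with the formation of quotients and \lemref{epi} does the rest. Two points deserve care: the argument must be applied to $\mathbb{M}_n(R)$ rather than to the corner ring $R$ (we do \emph{not} assume, nor need, that $R$ itself is weakly nil-clean), and the elimination of the $\mathbb{Z}_3$-block genuinely relies on $n\ge 2$, which is precisely the standing hypothesis. This is also consistent with the remark preceding the statement, which lists $\mathbb{Z}_3$ as an allowed strongly regular weakly nil-clean ring: it is exactly this factor that $n\ge 2$ rules out.
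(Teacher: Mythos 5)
Your proof is correct and follows essentially the same route as the paper: realize the strongly regular ring $R$ as a subdirect product of division rings, pass to the quotients $\mathbb{M}_n(R)/\mathbb{M}_n(P)\cong\mathbb{M}_n(D_P)$ using Lemma~\ref{epi}, and invoke Theorem~\ref{main} with $n\ge 2$ to force each $D_P\cong\mathbb{Z}_2$, whence $R$ is Boolean. The only difference is cosmetic: where the paper simply cites \cite{L01} for the subdirect decomposition of a strongly regular ring into division rings, you supply that (correct) argument yourself.
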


\begin{proof} The sufficiency follows directly from \cite{BCDM13}, so that let we now treat the necessity. It is well known that every strongly regular ring is a subdirect product of division rings (see, e.g., \cite{L01}). Then $\mathbb{M}_n(R)$ is a subdirect product of matrix rings over division rings (cf. \cite{L01}). By virtue of Lemma \ref{epi}, we deduce that every such a matrix ring is weakly nil-clean, hence Theorem~\ref{main} allows us to conclude that every division ring is isomorphic to $\Z_2$. Thus $R$ must be Boolean, as asserted.
\end{proof}

\section*{Acknowledgments} The first-named author is supported by the CNCS-UEFISCDI grant PN-II-RU-PCE-2012-4-0100. The research of the third-named author was supported by a Discovery Grant (grant \# 194196) from NSERC of Canada.

\end{document}